\documentclass[12pt,a4paper]{amsart}
\usepackage{amsmath, amsthm, amsfonts, amssymb, txfonts, graphicx, hyperref, bm,verbatim,siunitx}

\addtolength{\textwidth}{3 truecm}
\addtolength{\textheight}{1 truecm}
\setlength{\voffset}{-.6 truecm}
\setlength{\hoffset}{-1.3 truecm}

\theoremstyle{plain}
\newtheorem{thrm}{Theorem}[section]
\newtheorem{lmm}[thrm]{Lemma}
\newtheorem{prpstn}[thrm]{Proposition}

\newtheorem*{cnjctr}{Conjecture}
\newtheorem*{rmk}{Remark}

\numberwithin{sblmm}{thrm} 
\numberwithin{equation}{section}

\renewcommand{\phi}{\varphi}
\begin{document}
\title{Small gaps between primes}
\author{James Maynard}
\address{Centre de recherches math\'ematiques,
Universit\'e de Montr\'eal,
Pavillon Andr\'e-Aisenstadt,
2920 Chemin de la tour, Room 5357,
Montr\'eal (Qu\'ebec) H3T 1J4}
\email{maynardj@dms.umontreal.ca}
\begin{abstract}
We introduce a refinement of the GPY sieve method for studying prime $k$-tuples and small gaps between primes. This refinement avoids previous limitations of the method, and allows us to show that for each $k$, the prime $k$-tuples conjecture holds for a positive proportion of admissible $k$-tuples. In particular, $\liminf_{n}(p_{n+m}-p_n)<\infty$ for every integer $m$. We also show that $\liminf(p_{n+1}-p_n)\le 600$, and, if we assume the Elliott-Halberstam conjecture, that $\liminf_n(p_{n+1}-p_n)\le 12$ and $\liminf_n (p_{n+2}-p_n)\le 600$.
\end{abstract}
\maketitle
\section{Introduction}
We say that a set $\mathcal{H}=\{h_1,\dotsc,h_k\}$ of distinct non-negative integers is `admissible' if, for every prime $p$, there is an integer $a_p$ such that $a_p\nequiv h\pmod{p}$ for all $h\in\mathcal{H}$. We are interested in the following conjecture.

\begin{cnjctr}[Prime $k$-tuples conjecture]
Let $\mathcal{H}=\{h_1,\dots,h_k\}$ be admissible. Then there are infinitely many integers $n$ such that all of $n+h_1$, $\dotsc$, $n+h_k$ are prime.
\end{cnjctr}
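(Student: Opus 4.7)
My plan is to attack the statement by constructing a non-negative detector and forcing its first moment against the joint primality indicator to be positive. Concretely, I would build weights $w_n \ge 0$ supported on $n \in [N,2N]$ and try to prove that
\[
S(N) := \sum_{N \le n < 2N} w_n \prod_{i=1}^{k} \mathbf{1}_{\mathbb{P}}(n+h_i) > 0
\]
for infinitely many $N$; any such $N$ immediately supplies an integer $n$ with $n+h_1,\dots,n+h_k$ all prime. The weights I would try first are the multidimensional GPY-type weights that the abstract advertises: $w_n = \bigl(\sum_{d_i\mid n+h_i\,\forall i}\lambda_{d_1,\dots,d_k}\bigr)^2$, with $\lambda$ supported on $k$-tuples satisfying $\prod_i d_i \le R$ and shaped by a smooth cutoff $F\bigl(\tfrac{\log d_1}{\log R},\dots,\tfrac{\log d_k}{\log R}\bigr)$.

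The next step is to evaluate $\sum_n w_n$ and, for each $i$, $\sum_n w_n \mathbf{1}_{\mathbb{P}}(n+h_i)$ asymptotically. The unweighted sum reduces, by standard singular-series manipulations, to $\mathfrak{S}(\mathcal{H})\,N(\log R)^k\,I(F)/k!$ for an explicit quadratic functional $I(F)$, provided $R$ is a suitable power of $N$. The prime-weighted sums require distribution of primes in arithmetic progressions with moduli up to the support of the convolutions of the $\lambda$'s; Bombieri--Vinogradov allows $R \le N^{1/4-\varepsilon}$ and Elliott--Halberstam permits $R \le N^{1/2-\varepsilon}$. Granted those inputs, each $i$-th prime-weighted sum becomes a similar quantity involving a functional $J_i(F)$. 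I would then try, by positivity or inclusion-exclusion, to choose $F$ so that $\sum_{i=1}^{k}\sum_n w_n \mathbf{1}_{\mathbb{P}}(n+h_i) > (k-1)\sum_n w_n$, which would force at least one $n$ with all of $n+h_1,\dots,n+h_k$ prime.

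The main obstacle --- and the reason the $k$-tuples conjecture has been open for nearly a century --- is Selberg's parity barrier. Because $w_n$ is a non-negative divisor-sum square, the ratio $\sum_n w_n \sum_i \mathbf{1}_{\mathbb{P}}(n+h_i)/\sum_n w_n$ is structurally capped; in this paper's optimization it produces at most $c\log R/\log N$ expected primes among the $n+h_i$, which is precisely what yields the advertised ``$m+1$ of them are prime'' theorems rather than the full conjecture. Detecting ``all $k$ are prime'' simultaneously is of parity type: any pure sieve weight is blind to the distinction between $n+h_i$ being prime and $n+h_i$ being a product of an odd number of primes of comparable size, so no choice of $F$ can push the ratio high enough. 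Alternative strategies do not currently escape this: the circle method is tailored to linear Diophantine equations in primes (Goldbach, Vinogradov) and has no workable minor-arc treatment for the non-linear system ``$n+h_i$ prime for all $i$'', and dispersion-method improvements in level of distribution do not bypass parity. Completing the proof would require a genuinely new ingredient --- some form of Type~II cancellation for $\mu$ beyond Bombieri--Vinogradov, or a non-sieve input altogether --- and I do not see how to supply it. I would expect any honest attempt at the full conjecture to stall precisely here.
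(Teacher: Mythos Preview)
Your assessment is correct, and there is nothing to compare against: the statement in question is the Prime $k$-tuples Conjecture itself, which the paper states as an open conjecture and does \emph{not} prove. No proof appears in the paper, nor is one claimed.

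Your diagnosis of why the approach stalls is accurate and matches the paper's own framing. The weighted-sum strategy you describe is precisely the engine behind the paper's actual results (Propositions~\ref{prpstn:MainProp} and~\ref{prpstn:Explicit}): one shows $S_2 - \rho S_1 > 0$ to force at least $\lfloor \rho+1\rfloor$ of the $n+h_i$ to be prime. The paper's optimization gives $\rho < \theta M_k/2$ with $M_k \le \log k + O(\log\log k)$ (Proposition~\ref{prpstn:FChoices}), so even under Elliott--Halberstam one obtains roughly $\tfrac{1}{2}\log k$ primes among $k$ shifts, never all $k$. Your identification of the parity problem as the structural obstruction to reaching $\rho = k-1$ is the standard and correct explanation; the paper makes no attempt to overcome it and instead proves the partial results of Theorems~\ref{thrm:ManyPrimes}--\ref{thrm:EH}.

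One small quantitative remark: your statement that Bombieri--Vinogradov allows $R \le N^{1/4-\varepsilon}$ is consistent with the paper's convention $R = N^{\theta/2-\delta}$ with $\theta = 1/2-\varepsilon$, so there is no discrepancy there.
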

When $k>1$ no case of the prime $k$-tuples conjecture is currently known. Work on approximations to the prime $k$-tuples conjecture has been very successful in showing the existence of small gaps between primes, however. In their celebrated paper \cite{GPY}, Goldston, Pintz and Y\i ld\i r\i m introduced a new method for counting tuples of primes, and this allowed them to show that
\begin{equation}
\liminf_n \frac{p_{n+1}-p_n}{\log{p_n}}=0.
\end{equation}
The recent breakthrough of Zhang \cite{Zhang} managed to extend this work to prove
\begin{equation}
\liminf_n(p_{n+1}-p_n)\le \num{70000000},\label{eq:ZhangBound}
\end{equation}
thereby establishing for the first time the existence of infinitely many bounded gaps between primes. Moreover, it follows from Zhang's theorem the that number of admissible sets of size 2 contained in $[1,x]^2$ which satisfy the prime $2$-tuples conjecture is $\gg x^2$ for large $x$. Thus, in this sense, a positive proportion of admissible sets of size 2 satisfy the prime 2-tuples conjecture. The recent polymath project \cite{Polymath} has succeeded in reducing the bound \eqref{eq:ZhangBound} to 4680, by optimizing Zhang's arguments and introducing several new refinements.

The above results have used the `GPY method' to study prime tuples and small gaps between primes, and this method relies heavily on the distribution of primes in arithmetic progressions. Given $\theta>0$, we say the primes have `level of distribution $\theta$'\footnote{We note that different authors have given slightly different names or definitions to this concept. For the purposes of this paper, \eqref{eq:LevelOfDistribution} will be our definition of the primes having level of distribution $\theta$.} if, for every $A>0$, we have
\begin{equation}
\sum_{q\le x^\theta}\max_{(a,q)=1}\Bigl|\pi(x;q,a)-\frac{\pi(x)}{\phi(q)}\Bigr|\ll_A \frac{x}{(\log{x})^A}.\label{eq:LevelOfDistribution}
\end{equation} 
The Bombieri-Vinogradov theorem establishes that the primes have level of distribution $\theta$ for every $\theta<1/2$, and Elliott and Halberstam \cite{ElliottHalberstam} conjectured that this could be extended to every $\theta<1$. Friedlander and Granville \cite{FriedlanderGranville} have shown that \eqref{eq:LevelOfDistribution} cannot hold with $x^{\theta}$ replaced with $x/(\log{x})^B$ for any fixed $B$, and so the Elliott-Halberstam conjecture is essentially the strongest possible result of this type.

The original work of Goldston, Pintz and Y\i ld\i r\i m showed the existence of bounded gaps between primes if \eqref{eq:LevelOfDistribution} holds for some $\theta>1/2$. Moreover, under the Elliott-Halberstam conjecture one had $\liminf_n(p_{n+1}-p_n)\le 16$. The key breakthrough of Zhang's work was in establishing that a slightly weakened form of \eqref{eq:LevelOfDistribution} holds for some $\theta>1/2$.

If one looks for bounded length intervals containing two or more primes, then the GPY method fails to prove such strong results. Unconditionally we are only able to improve upon the trivial bound from the prime number theorem by a constant factor \cite{GPY:ManyPrimes}, and even assuming the Elliott-Halberstam conjecture, the best available result \cite{GPY} is
\begin{equation}
\liminf_n\frac{p_{n+2}-p_n}{\log{p_n}}=0.
\end{equation}
The aim of this paper is to introduce a refinement of the GPY method which removes the barrier of $\theta=1/2$ to establishing bounded gaps between primes, and allows us to show the existence of arbitrarily many primes in bounded length intervals. This answers the second and third questions posed in \cite{GPY} on extensions of the GPY method (the first having been answered by Zhang's result). Our new method also has the benefit that it produces numerically superior results to previous approaches.

\begin{thrm}\label{thrm:ManyPrimes}
Let $m\in\mathbb{N}$. We have
\[\liminf_{n}(p_{n+m}-p_n)\ll m^3 e^{4m}.\]
\end{thrm}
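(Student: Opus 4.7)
The plan is to refine the GPY sieve by replacing its one-dimensional divisor weight with a genuinely $k$-dimensional one. Fix an admissible $k$-tuple $\mathcal{H}=\{h_1,\dots,h_k\}\subset[0,H]$, where a greedy construction gives $H\ll k\log k$, and for $N$ large consider
\begin{equation*}
S \;:=\; \sum_{N\le n<2N}\Bigl(\sum_{i=1}^{k}\mathbf{1}_{\mathbb{P}}(n+h_i)-m\Bigr)w_n,
\end{equation*}
where $w_n\ge 0$ are sieve weights to be chosen. If $S>0$ then for some $n$ in the range at least $m+1$ of $n+h_1,\dots,n+h_k$ are prime, and iterating over $N$ gives $\liminf_n(p_{n+m}-p_n)\le H$. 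So the task is to find $\mathcal{H}$ and weights $w_n$ making $S$ positive for all large $N$.

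For the weights I would take
\begin{equation*}
w_n \;=\; \Bigl(\sum_{\substack{d_i\mid n+h_i\\ 1\le i\le k}}\lambda_{d_1,\dots,d_k}\Bigr)^{2},
\end{equation*}
with $\lambda_{d_1,\dots,d_k}$ of Selberg shape, supported on $\prod_i d_i\le R=N^{1/4-\delta}$ (so that the resulting arithmetic progressions can be handled by Bombieri--Vinogradov, giving $\theta=1/2$), and parametrised by a smooth function $F:[0,1]^k\to\mathbb{R}$ vanishing outside the simplex $\{\sum x_i\le 1\}$. A Selberg-style diagonalisation, formally analogous to the $k=1$ calculation of GPY but with $k$-fold combinatorial bookkeeping, then yields asymptotics
\begin{equation*}
\sum_n w_n\sim C\,I_k(F),\qquad \sum_n\mathbf{1}_{\mathbb{P}}(n+h_i)\,w_n\sim C\cdot\frac{\log R}{\log N}\,J_k^{(i)}(F),
\end{equation*}
with a common normalising factor $C=C(N,R,\mathcal{H})$, $I_k(F)=\int_{[0,1]^k}F(x)^2\,dx$ and $J_k^{(i)}(F)=\int_{[0,1]^{k-1}}\bigl(\int_0^1 F(x)\,dx_i\bigr)^2\prod_{j\ne i}dx_j$. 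Dividing out, $S>0$ reduces to leading order to
\begin{equation*}
M_k(F)\;:=\;\frac{\sum_{i=1}^{k}J_k^{(i)}(F)}{I_k(F)}\;>\;\frac{m\log N}{\log R}\;=\;4m+o(1).
\end{equation*}

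The remaining problem is purely variational: exhibit $F$ with $M_k(F)$ as large as possible. I would try a symmetric test function of essentially product type on the truncated simplex, say something of the form $F(x)=g(x_1+\cdots+x_k)\prod_i \mathbf{1}_{x_i\le T/k}$ for a carefully chosen polynomial $g$; both integrals then reduce to explicit beta-type expressions and one can extract a lower bound of the shape $M_k\ge \log k-2\log\log k-O(1)$. Inverting, $M_k>4m$ holds once $k\ge Cm^{2}e^{4m}$, and for such $k$ the smallest admissible $k$-tuple has diameter $H=(1+o(1))k\log k\ll m^{3}e^{4m}$, giving the theorem.

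The main obstacle is the variational lower bound $M_k\gg\log k$. Classical GPY weights correspond to the special case where $F$ depends only on the sum $x_1+\cdots+x_k$, and for such $F$ one can show $M_k$ is bounded by $4+o(1)$; this is precisely the barrier that has prevented the unconditional treatment of $m\ge 2$ via the old method. Breaking it requires genuinely exploiting the full $k$-variable freedom of $F$, and verifying that a concrete symmetric choice delivers logarithmic growth with control over the error as a function of $m$ is where the bulk of the analytic effort must go. The sieve asymptotics in paragraph two, though longer than in the one-dimensional case, are essentially standard once Bombieri--Vinogradov is invoked.
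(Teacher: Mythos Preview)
Your overall strategy is exactly that of the paper: multidimensional Selberg weights parametrised by a function $F$ on the simplex, reduction via Bombieri--Vinogradov to the inequality $M_k(F)>4m+o(1)$, and a variational lower bound $M_k\ge\log k-2\log\log k-O(1)$ forcing $k\asymp m^2e^{4m}$ and hence diameter $\ll m^3e^{4m}$ via a greedy admissible tuple.

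The gap is in your proposed test function. The form
\[
F(x)=g(x_1+\cdots+x_k)\prod_{i=1}^{k}\mathbf{1}_{x_i\le T/k}
\]
will not deliver $M_k\to\infty$. The smooth factor $g(\sum_i x_i)$ is precisely a function of the sum, which you yourself correctly identify as being stuck at $M_k\le 4+o(1)$; the coordinate truncation does not rescue this. Indeed, Cauchy--Schwarz on the inner integral (an interval of length $\le T/k$) gives $J_k^{(m)}(F)\le (T/k)I_k(F)$, hence $M_k(F)\le T$ for any such $F$; but once $T$ is made large enough to be useful, the box $[0,T/k]^k$ contains most of the simplex mass and the truncation becomes essentially vacuous, returning you to $g(\sum_i x_i)$ alone. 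Your phrase ``essentially product type'' is right in spirit, but the formula you wrote is not a product in the coordinates.

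What the paper actually takes is a genuine product,
\[
F(t_1,\dots,t_k)=\prod_{i=1}^{k}g(kt_i)\quad\text{on }\mathcal{R}_k,\qquad g(t)=\frac{1}{1+At}\ \text{on }[0,T],
\]
with $A$ and $T$ to be chosen. The point of the product structure is that, once the simplex constraint is shown to be negligible (this is done by a second-moment concentration argument, valid when the centre of mass $\int ug(u)^2\,du/\int g(u)^2\,du$ is strictly below $1$), the integrals factor and one gets
\[
\frac{kJ_k}{I_k}\ \ge\ \frac{\bigl(\int_0^T g(u)\,du\bigr)^2}{\int_0^T g(u)^2\,du}\Bigl(1-O\bigl(k^{-1}T\bigr)\Bigr).
\]
Choosing $A=\log k-2\log\log k$ and $1+AT=e^A$ makes the leading ratio $\approx A$, yielding $M_k\ge\log k-2\log\log k-2$. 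So the ``bulk of the analytic effort'' you anticipated is real, but it hinges on a coordinatewise product test function rather than a sum-based one with truncation.
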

Terence Tao (private communication) has independently proven Theorem \ref{thrm:ManyPrimes} (with a slightly weaker bound) at much the same time. He uses a similar method; the steps are more-or-less the same but the calculations are done differently. We will indicate some of the differences in our proofs as we go along.

We see that the bound in Theorem \ref{thrm:ManyPrimes} is quite far from the conjectural bound of approximately $m\log{m}$ predicted by the prime $m$-tuples conjecture.

Our proof naturally generalizes (but with a weaker upper bound) to many subsequences of the primes which have a level of distribution $\theta>0$. For example, we can show corresponding results where the primes are contained in short intervals $[N,N+N^{7/12+\epsilon}]$ for any $\epsilon>0$ or in an arithmetic progression modulo $q\ll (\log{N})^A$. In particular, our method gives results for simultaneously prime values of linear functions, which might have specific interest. Given $k$ distinct linear functions $L_i(n)=a_in+b_i$ ($1\le i\le k$) with positive integer coefficients such that the product function $\Pi(n)=\prod_{i=1}^kL_i(n)$ has no fixed prime divisor, the method presented here shows that there are infinitely many integers $n$ such that at least $(1/4+o_{k\rightarrow\infty}(1))\log{k}$ of the $L_i(n)$ are prime.

\begin{thrm}\label{thrm:kTuples}
Let $m\in\mathbb{N}$. Let $r\in\mathbb{N}$ be sufficiently large depending on $m$, and let $\mathcal{A}=\{a_1,a_2,\dotsc,a_r\}$ be a set of $r$ distinct integers. Then we have
\[\frac{\#\{\{h_1,\dotsc,h_m\}\subseteq \mathcal{A}:\text{for infinitely many $n$ all of $n+h_1$, $\dotsc$, $n+h_m$ are prime}\}}{\#\{\{h_1,\dotsc,h_m\}\subseteq \mathcal{A}\}}\gg_m 1.\]
\end{thrm}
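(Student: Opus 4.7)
The plan is to bootstrap Theorem \ref{thrm:ManyPrimes} via two combinatorial ingredients---a pigeonhole and a double count. First I would extract from the sieve machinery driving Theorem \ref{thrm:ManyPrimes} the following stronger uniform statement: there exists $k_0=k_0(m)$ such that for every admissible $k$-tuple $\mathcal{H}=\{h_1,\dotsc,h_k\}$ with $k\ge k_0$, infinitely many $n$ have the property that at least $m$ of $n+h_1,\dotsc,n+h_k$ are simultaneously prime. Isolating this uniform engine (rather than relying on the black-box conclusion of Theorem \ref{thrm:ManyPrimes}) is the first task; the remaining steps are essentially combinatorial.

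Fix $k=k_0(m)$ and let $|\mathcal{A}|=r$ with $r$ large. For each admissible $k$-subset $\mathcal{H}\subseteq\mathcal{A}$, the engine produces infinitely many $n$ with at least $m$ primes among $\{n+h:h\in\mathcal{H}\}$. For each such $n$, choose an $m$-subset $S_n\subseteq\mathcal{H}$ on which every shift $n+h$ ($h\in S_n$) is prime. Since there are only $\binom{k}{m}$ possible labels $S_n$, a pigeonhole yields a fixed $m$-subset $\mathcal{H}'\subseteq\mathcal{H}$ which arises infinitely often as $S_n$; this $\mathcal{H}'$ satisfies the prime $m$-tuples conjecture. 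Let $G$ be the family of $m$-subsets of $\mathcal{A}$ satisfying the conjecture and $K$ the family of admissible $k$-subsets of $\mathcal{A}$. Every member of $K$ contributes at least one member of $G$, while every member of $G$ is contained in at most $\binom{r-m}{k-m}$ members of $K$. Using $\binom{r}{k}\binom{k}{m}=\binom{r}{m}\binom{r-m}{k-m}$ gives
\[
\#G\ \ge\ \frac{\#K}{\binom{r-m}{k-m}}\ =\ \frac{\#K}{\binom{r}{k}}\cdot\frac{\binom{r}{m}}{\binom{k}{m}},
\]
so it suffices to show $\#K\gg_k\binom{r}{k}$.

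For the final step, set $Q=\prod_{p\le k}p$. The admissibility condition is automatic at primes $p>k$, so by CRT a $k$-subset of $\mathbb{Z}$ is admissible if and only if it avoids some residue class modulo $Q$. For an arbitrary $\mathcal{A}$ of size $r$, pigeonholing $\mathcal{A}$ into the $Q$ residues modulo $Q$ produces a class containing at most $r/Q$ elements of $\mathcal{A}$, and the number of $k$-subsets disjoint from that class is $\binom{r-\lfloor r/Q\rfloor}{k}\gg_k\binom{r}{k}$ once $r$ is large, each such subset being admissible. Chaining the three steps delivers $\#G\gg_{k,m}\binom{r}{m}$, which is Theorem \ref{thrm:kTuples}. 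The principal obstacle, and where I would focus the analytic work, is the first paragraph: one needs the uniform version of Theorem \ref{thrm:ManyPrimes} with an explicit threshold $k_0(m)$ and no dependence on $\mathcal{H}$ beyond admissibility, which should fall out of the sieve weights once they are arranged to be uniform in the choice of admissible tuple.
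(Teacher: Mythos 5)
Your high‑level strategy is the paper's: pull a uniform statement out of Theorem~\ref{thrm:ManyPrimes} (every admissible $k$-tuple with $k\ge k_0(m)$ contains an $m$-subtuple satisfying the prime $m$-tuples conjecture), then double‑count to deduce that a positive proportion of $m$-subsets of $\mathcal{A}$ are good, reducing matters to showing that a positive proportion of $k$-subsets of $\mathcal{A}$ are admissible. The pigeonhole $\binom{r}{k}\binom{k}{m}=\binom{r}{m}\binom{r-m}{k-m}$ step is exactly what the paper does.

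However, your mechanism for the final step is incorrect. You claim that, with $Q=\prod_{p\le k}p$, a $k$-subset of $\mathbb{Z}$ is admissible if and only if it avoids \emph{some} residue class modulo $Q$, and then you produce admissible $k$-subsets as those disjoint from a single sparse class modulo $Q$. But avoiding one residue class mod $Q$ does not imply avoiding a residue class mod $p$ for each prime $p\le k$. For instance with $k=3$, $Q=6$: the set $\{1,2,3\}$ avoids $0\pmod 6$ yet covers both residues modulo $2$, so it is \emph{not} admissible. In general, for any $k$-subset with $k<Q$ some class mod $Q$ is automatically avoided, so your criterion is vacuous and the count $\binom{r-\lfloor r/Q\rfloor}{k}$ wildly overcounts admissible sets. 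The fix is the paper's iterative pruning: for each prime $p\le k$ in turn, delete from the current set the smallest residue class modulo $p$. The resulting set $\mathcal{A}_2$ has size $\ge r\prod_{p\le k}(1-1/p)\gg_k r$, and it misses a whole class mod $p$ for every $p\le k$ simultaneously, so \emph{every} $k$-subset of $\mathcal{A}_2$ is admissible. Restricting the double count to $\mathcal{A}_2$ then yields $\gg_m s^m\gg_m r^m$ good $m$-subsets as required. With that replacement your argument goes through and coincides with the paper's.
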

Thus a positive proportion of admissible $m$-tuples satisfy the prime $m$-tuples conjecture for every $m$, in an appropriate sense.
\begin{thrm}\label{thrm:Unconditional}We have
\[\liminf_n(p_{n+1}-p_n)\le 600.\]
\end{thrm}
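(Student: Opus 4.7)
The plan is to deduce Theorem~\ref{thrm:Unconditional} as a direct application of the general sieve machinery developed in the paper, specialised to the case $m=1$ and to an explicit small value of $k$. Fix an admissible $k$-tuple $\mathcal{H}=\{h_1,\dots,h_k\}$ and consider the weighted count
\[
S(N) \;=\; \sum_{N\le n<2N}\Bigl(\,\sum_{i=1}^{k}\mathbf{1}_{\mathbb{P}}(n+h_i)\;-\;1\Bigr)\,w_n,
\]
where $w_n\ge 0$. If $S(N)>0$ for arbitrarily large $N$, then some $n\in[N,2N)$ yields at least two primes among $n+h_1,\dots,n+h_k$, producing a prime gap of size at most $\operatorname{diam}\mathcal{H}$, and so the theorem is reduced to exhibiting a suitable $\mathcal{H}$ together with weights for which $S(N)$ is positive.

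The weights I would employ are the multidimensional Selberg-type weights introduced in this paper,
\[
w_n \;=\; \Bigl(\sum_{\substack{d_1,\dots,d_k\\ d_i\mid n+h_i\ \forall i}}\lambda_{d_1,\dots,d_k}\Bigr)^2,
\]
with $\lambda_{\mathbf{d}}$ supported on $\prod_i d_i\le R$ and parametrised by a smooth symmetric function $F$ on the simplex $\{(x_1,\dots,x_k)\in[0,1]^k:\sum_i x_i\le 1\}$. Using the Bombieri--Vinogradov theorem (level of distribution $1/2$), one may take $R$ to be a small fixed power of $N$ and evaluate $\sum_n w_n$ and $\sum_n\mathbf{1}_{\mathbb{P}}(n+h_i)w_n$ asymptotically in terms of explicit integrals of $F$ and its first partial derivatives. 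Optimising over $F$, the ratio $S(N)/\sum_n w_n$ is controlled by a purely analytic quantity $M_k$ depending only on $k$, and the positivity $S(N)>0$ is implied once $M_k$ exceeds a fixed absolute threshold coming from the level of distribution.

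The main obstacle --- and the origin of the numerical constant $600$ --- is therefore twofold. First, I need to identify the smallest $k$ for which $M_k$ provably beats the threshold. A convenient class of test functions is given by symmetric combinations of monomials of the form $\prod_i(1-x_i)^{a_i}$ restricted to the simplex; these lead to a lower bound for $M_k$ that grows at least like $\log k$ (as required also for Theorem~\ref{thrm:ManyPrimes}), so only a moderate $k$ suffices. Explicit computation with a judiciously chosen such $F$ should work at $k=105$. Second, I must produce an admissible $105$-tuple of diameter at most $600$: this is a purely combinatorial task, handled by a direct construction (for instance, taking $105$ primes just beyond $k=105$ and then narrowing the tuple if necessary) together with a verification of admissibility at every prime $p\le k$. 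Combining these two ingredients yields $\liminf_n(p_{n+1}-p_n)\le\operatorname{diam}\mathcal{H}\le 600$.
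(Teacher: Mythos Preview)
Your overall strategy matches the paper's: use the multidimensional Selberg weights, Bombieri--Vinogradov for level of distribution $\theta<1/2$, reduce to showing that the variational quantity $M_k$ exceeds the threshold (which here is precisely $M_k>4$, since one needs $\theta M_k/2>1$), and then input an admissible $k$-tuple of small diameter. So the architecture is right.

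There are, however, two places where your sketch does not quite close the gap to $600$. First, the class of test functions you propose --- symmetrizations of $\prod_i(1-x_i)^{a_i}$ --- is not the one the paper uses, and it is not clear that it is rich enough to certify $M_{105}>4$. The paper instead takes $F$ to be a polynomial in the power sums $P_1=\sum_i t_i$ and $P_2=\sum_i t_i^2$, specifically a linear combination of all monomials $(1-P_1)^b P_2^c$ with $b+2c\le 11$; with these $42$ basis functions one obtains explicit $42\times 42$ symmetric matrices $M_1,M_2$ and a computer check that the largest eigenvalue of $M_1^{-1}M_2$ exceeds $4$. The presence of $P_2$ is what distinguishes this from the one-variable GPY weights, and your proposed basis does not obviously capture it.

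Second, your construction of the admissible $105$-tuple does not reach diameter $600$: the first $105$ primes exceeding $105$ are $p_{28},\dots,p_{132}=107,\dots,743$, of diameter $636$. ``Narrowing if necessary'' is not a routine step; the paper instead invokes an explicit optimized tuple (due to Engelsma) of diameter exactly $600$. Without such a precomputed narrow admissible set, your argument would only yield a bound somewhat above $600$.
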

We emphasize that the above result does not incorporate any of the technology used by Zhang to establish the existence of bounded gaps between primes. The proof is essentially elementary, relying only on the Bombieri-Vinogradov theorem. Naturally, if we assume that the primes have a higher level of distribution, then we can obtain stronger results.
\begin{thrm}\label{thrm:EH}
Assume that the primes have level of distribution $\theta$ for every $\theta<1$. Then
\begin{align*}\liminf_n(p_{n+1}-p_n)&\le 12,\\
\liminf_n(p_{n+2}-p_n)&\le 600.\end{align*}
\end{thrm}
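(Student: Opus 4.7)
The plan is to run the refined GPY-type sieve advertised in the introduction, using more general multi-dimensional weights than the original one-variable choice of Goldston--Pintz--Y\i ld\i r\i m. Fix $m\in\{1,2\}$ and an admissible $k$-tuple $\mathcal{H}=\{h_1,\dotsc,h_k\}\subset[0,H]$. After the standard $W$-trick ($W=\prod_{p\le D_0}p$, $n$ restricted to a residue class $v_0\pmod W$ with $(v_0+h_i,W)=1$ for all $i$), I will introduce weights of the form
\[
w_n=\Bigl(\sum_{\substack{d_i\mid n+h_i\\ \forall i}}\lambda_{d_1,\dotsc,d_k}\Bigr)^{2},\qquad \lambda_{d_1,\dots,d_k}\text{ supported on }\prod d_i<R,\ (\prod d_i,W)=1,
\]
where the $\lambda$'s are obtained from a smooth symmetric test function $F:[0,1]^k\to\mathbb{R}$ supported in $\{\sum x_i\le 1\}$ via the change of variables familiar from Selberg's sieve. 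I take $R=N^{\theta/2-\delta}$; the Elliott--Halberstam hypothesis lets me push $\theta$ arbitrarily close to $1$.

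Next I compute the asymptotics of the two basic sums
\[
S_1=\sum_{\substack{N\le n<2N\\ n\equiv v_0(W)}}w_n,\qquad S_2=\sum_{\substack{N\le n<2N\\ n\equiv v_0(W)}}\Bigl(\sum_{i=1}^{k}\mathbf{1}_{\mathbb{P}}(n+h_i)\Bigr)w_n.
\]
Standard Selberg manipulations (opening the square, swapping sums, separating the divisors) reduce $S_1$ to a main term proportional to $(\log R)^k I_k(F)$ where $I_k(F)=\int_{[0,1]^k}F^2$, while the Bombieri--Vinogradov-type input (here in its EH form) produces
\[
S_2=(1+o(1))\,\frac{\phi(W)^k N(\log R)^{k+1}}{W^{k+1}\log N}\sum_{j=1}^{k}J_k^{(j)}(F),\qquad J_k^{(j)}(F)=\int_{[0,1]^{k-1}}\!\!\Bigl(\int_0^1 F\,dx_j\Bigr)^{2}d\hat x.
\]
Setting $\rho=m$ and considering $S_2-\rho S_1$, I conclude that if
\[
M_k:=\sup_{F\not\equiv 0}\frac{\sum_{j=1}^{k}J_k^{(j)}(F)}{I_k(F)}>\frac{2m}{\theta},
\]
then for infinitely many $n$ at least $m+1$ of $n+h_1,\dotsc,n+h_k$ are prime, whence $\liminf(p_{n+m}-p_n)\le H$. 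Under EH, $\theta\uparrow 1$ is permitted, so it suffices to exhibit $F$ with $\sum J_k^{(j)}(F)/I_k(F)>2m$.

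For the first bound I take the admissible $5$-tuple $\mathcal{H}=\{0,2,6,8,12\}$ (diameter $12$) and aim to show $M_5>2$. For the second I use the same admissible $105$-tuple of diameter $600$ that underlies Theorem \ref{thrm:Unconditional} (or any admissible tuple fitting in $[0,600]$ of comparable size) and aim for $M_k>4$. To prove these numerical inequalities I parametrize $F$ as a symmetric polynomial in the elementary symmetric combinations $P_1=\sum x_i$ and $P_2=\sum x_i^2$ restricted to $P_1\le 1$, writing $F=\sum_{a+b\le d}c_{a,b}(1-P_1)^{a}P_2^{b}$ for modest $d$. Both $I_k(F)$ and $J_k^{(j)}(F)$ become explicit rational quadratic forms in the coefficients $c_{a,b}$ (computable via Beta-function integrals), and $M_k$ is realized as the largest generalized eigenvalue of a pair of small matrices, which one bounds from below by evaluating at a well-chosen trial vector.

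The genuine obstacle is this last variational estimate: I need $M_5$ strictly above $2$ using only a handful of basis functions for $F$, and similarly for the $600$-tuple I need to beat the threshold $4$ with $k$ small enough that the admissible tuple still fits in length $600$. Everything else (the $W$-trick, the Selberg linearization, applying EH to control the level-of-distribution error, the passage from $S_2-mS_1>0$ to the existence of infinitely many $n$ with $\ge m+1$ primes among $n+h_i$) is technical but routine; the arithmetic content is entirely encoded in EH, and the remaining work is the combinatorial calculus problem of lower-bounding $M_k$.
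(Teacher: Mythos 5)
Your proposal follows the paper's own proof essentially verbatim: the same multi-dimensional Selberg weights, the same $W$-trick, the same asymptotics $S_1\sim(\log R)^k I_k(F)$ and $S_2\sim(\log R)^{k+1}\sum_j J_k^{(j)}(F)/\log N$, the same criterion $\theta M_k/2>m$ via $S_2-\rho S_1>0$, the same admissible tuples ($\{0,2,6,8,12\}$ for $m=1$ and a $105$-tuple of diameter $600$ for $m=2$), and the same reduction of the variational problem to a generalized eigenvalue computation for polynomials in $P_1=\sum t_i$ and $P_2=\sum t_i^2$. You have correctly identified that the only non-routine content is the numerical lower bounds $M_5>2$ and $M_{105}>4$, which the paper establishes by explicit computation (Lemma \ref{lmm:QuadraticForms} plus a machine-checked eigenvalue bound).
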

Although the constant \num{12} of Theorem \ref{thrm:EH} appears to be optimal with our method in its current form, the constant \num{600} appearing in Theorem \ref{thrm:Unconditional} and Theorem \ref{thrm:EH} is certainly not optimal. By performing further numerical calculations our method could produce a better bound, and also most of the ideas of Zhang's work (and the refinements produced by the polymath project) should be able to be combined with this method to reduce the constant further. We comment that the assumption of the Elliott-Halberstam conjecture allows us to improve the bound on Theorem \ref{thrm:ManyPrimes} to $O(m^3e^{2m})$.
\section{An improved GPY sieve method}
We first give an explanation of the key idea behind our new approach. The basic idea of the GPY method is, for a fixed admissible set $\mathcal{H}=\{h_1,\dotsc,h_k\}$, to consider the sum
\begin{align}
S(N,\rho)&=\sum_{N\le n<2N}\Bigl(\sum_{i=1}^k\chi_{\mathbb{P}}(n+h_i)-\rho\Bigr)w_n.\label{eq:BasicSum}
\end{align}
Here $\chi_{\mathbb{P}}$ is the characteristic function of the primes, $\rho>0$ and $w_n$ are non-negative weights. If we can show that $S(N,\rho)>0$ then at least one term in the sum over $n$ must have a positive contribution. By the non-negativity of $w_n$, this means that there must be some integer $n\in[N,2N]$ such that at least $\lfloor\rho+1\rfloor$ of the $n+h_i$ are prime. (Here $\lfloor x\rfloor$ denotes the largest integer less than or equal to $x$.) Thus if $S(N,\rho)>0$ for all large $N$, there are infinitely many integers $n$ for which at least $\lfloor\rho+1\rfloor$ of the $n+h_i$ are prime (and so there are infinitely many bounded length intervals containing $\lfloor\rho+1\rfloor$ primes).

The weights $w_n$ are typically chosen to mimic Selberg sieve weights. Estimating \eqref{eq:BasicSum} can be interpreted as a `$k$-dimensional' sieve problem. The standard Selberg $k$-dimensional weights (which can be shown to be essentially optimal in other contexts) are
\begin{equation}
w_n=\Bigl(\sum_{\substack{d|\prod_{i=1}^k(n+h_i)\\ d<R}}\lambda_d\Bigr)^2,\qquad \lambda_d=\mu(d)(\log{R/d})^k.
\end{equation}
With this choice we find that we just fail to prove the existence of bounded gaps between primes if we assume the Elliott-Halberstam conjecture. The key new idea in the paper of Goldston, Pintz and Y\i ld\i r\i m \cite{GPY} was to consider more general sieve weights of the form
\begin{equation}
\lambda_d=\mu(d)F(\log{R/d}),\label{eq:GPYWeights}
\end{equation}
for a suitable smooth function $F$. Goldston, Pintz and Y\i ld\i r\i m chose $F(x)=x^{k+l}$ for suitable $l\in\mathbb{N}$, which has been shown to be essentially optimal when $k$ is large. This allows us to gain a factor of approximately \num{2} for large $k$ over the previous choice of sieve weights. As a result we just fail to prove bounded gaps using the fact that the primes have exponent of distribution $\theta$ for any $\theta<1/2$, but succeed in doing so if we assume they have level of distribution $\theta>1/2$. 

The new ingredient in our method is to consider a more general form of the sieve weights
\begin{equation}
w_n=\Bigl(\sum_{d_i|n+h_i\forall i}\lambda_{d_1,\dotsc,d_k}\Bigr)^2.\label{eq:WChoice}
\end{equation}
Using such weights with $\lambda_{d_1,\dotsc,d_k}$ is the key feature of our method. It allows us to improve on the previous choice of sieve weights by an arbitrarily large factor, provided that $k$ is sufficiently large. It is the extra flexibility gained by allowing the weights to depend on the divisors of each factor individually which gives this improvement.

The idea to use such weights is not entirely new. Selberg \cite[Page 245]{Selberg:CollectedWorks} suggested the possible use of similar weights in his work on approximations to the twin prime problem, and Goldston and Y\i ld\i r\i m \cite{GoldstonYildirim} considered similar weights in earlier work on the GPY method, but with the support restricted to $d_i<R^{1/k}$ for all $i$.

We comment that our choice of $\lambda_{d_1,\dots,d_k}$ will look like
\begin{equation}
\lambda_{d_1,\dots,d_k}\approx \Bigl(\prod_{i=1}^k\mu(d_i)\Bigr)f(d_1,\dots,d_k),\label{eq:ApproximateLambda}
\end{equation}
for a suitable smooth function $f$. For our precise choice of $\lambda_{d_1,\dots,d_k}$ (given in Proposition \ref{prpstn:MainProp}) we find it convenient to give a slightly different form of $\lambda_{d_1,\dots,d_k}$, but weights of the form \eqref{eq:ApproximateLambda} should produce essentially the same results.
\section{Notation}
We shall view $k$ as a fixed integer, and $\mathcal{H}=\{h_1,\dots,h_k\}$ as a fixed admissible set. In particular, any constants implied by the asymptotic notation $o$, $O$ or $\ll$ may depend on $k$ and $\mathcal{H}$. We will let $N$ denote a large integer, and all asymptotic notation should be interpreted as referring to the limit $N\rightarrow\infty$.

All sums, products and suprema will be assumed to be taken over variables lying in the natural numbers $\mathbb{N}=\{1,2,\dots\}$ unless specified otherwise. The exception to this is when sums or products are over a variable $p$, which instead will be assumed to lie in the prime numbers $\mathbb{P}=\{2,3,\dots,\}$.

Throughout the paper, $\phi$ will denote the Euler totient function, $\tau_r(n)$ the number of ways of writing $n$ as a product of $r$ natural numbers and $\mu$ the Moebius function. We will let $\epsilon$ be a fixed positive real number, and we may assume without further comment that $\epsilon$ is sufficiently small at various stages of our argument. We let $p_n$ denote the $n^{th}$ prime, and $\#\mathcal{A}$ denote the number of elements of a finite set $\mathcal{A}$. We use $\lfloor x\rfloor$ to denote the largest integer $n\le x$, and $\lceil x\rceil$ the smallest integer $n\ge x$. We let $(a,b)$ be the greatest common divisor of integers $a$ and $b$. Finally, $[a,b]$ will denote the closed interval on the real line with endpoints $a$ and $b$, except for in Section 5 where it will denote the least common multiple of integers $a$ and $b$ instead.
\section{Outline of the proof}\label{sctn:Outline}
We will find it convenient to choose our weights $w_n$ to be zero unless $n$ lies in a fixed residue class $v_0\pmod{W}$, where $W=\prod_{p\le D_0}p$. This is a technical modification which removes some minor complications in dealing with the effect of small prime factors. The precise choice of $D_0$ is unimportant, but it will suffice to choose
\begin{equation}D_0=\log\log\log{N},\end{equation}
so certainly $W\ll (\log\log{N})^2$ by the prime number theorem. By the Chinese remainder theorem, we can choose $v_0$ such that $v_0+h_i$ is coprime to $W$ for each $i$ since $\mathcal{H}$ is admissible. When $n\equiv v_0\pmod{W}$, we choose our weights $w_n$ of the form \eqref{eq:WChoice}. We now wish to estimate the sums
\begin{align}
S_1&=\sum_{\substack{N\le n<2N\\n \equiv v_0\pmod{W}}}\left(\sum_{d_i|n+h_i\forall i}\lambda_{d_1,\dotsc,d_k}\right)^2,\\
S_2&=\sum_{\substack{N\le n<2N\\n \equiv v_0\pmod{W}}}\Bigl(\sum_{i=1}^k\chi_{\mathbb{P}}(n+h_i)\Bigr)\left(\sum_{d_i|n+h_i\forall i}\lambda_{d_1,\dotsc,d_k}\right)^2.
\end{align}
We evaluate these sums using the following proposition.
\begin{prpstn}\label{prpstn:MainProp}
 Let the primes have exponent of distribution $\theta>0$, and let $R=N^{\theta/2-\delta}$ for some small fixed $\delta>0$. Let $\lambda_{d_1,\dotsc,d_k}$ be defined in terms of a fixed smooth function $F$ by
\[\lambda_{d_1,\dotsc,d_k}=\Bigl(\prod_{i=1}^k\mu(d_i)d_i\Bigr)\sum_{\substack{r_1,\dotsc,r_k\\d_i|r_i\forall i\\(r_i,W)=1\forall i}}\frac{\mu(\prod_{i=1}^kr_i)^2}{\prod_{i=1}^k\phi(r_i)}F\left(\frac{\log{r_1}}{\log{R}},\dotsc,\frac{\log{r_k}}{\log{R}}\right),\]
whenever $(\prod_{i=1}^kd_i,W)=1$, and let $\lambda_{d_1,\dotsc,d_k}=0$ otherwise. Moreover, let $F$ be supported on $\mathcal{R}_k=\{(x_1,\dotsc,x_k)\in[0,1]^k:\sum_{i=1}^kx_i\le 1\}$.
Then we have
\begin{align*}
S_1&=\frac{(1+o(1)) \phi(W)^k N (\log{R})^k}{W^{k+1}} I_k(F),\\
S_2&=\frac{(1+o(1)) \phi(W)^k N (\log{R})^{k+1}}{W^{k+1}\log{N}}\sum_{m=1}^kJ_k^{(m)}(F),
\end{align*}
provided $I_k(F)\ne 0$ and $J_k^{(m)}(F)\ne 0$ for each $m$, where
\begin{align*}
I_k(F)&=\int_0^1\dotsi \int_0^1 F(t_1,\dotsc, t_k)^2dt_1\dotsc dt_k,\\
J_k^{(m)}(F)&=\int_0^1\dotsi \int_0^1 \left(\int_0^1 F(t_1,\dotsc,t_k)dt_m\right)^2 dt_1\dotsc dt_{m-1} dt_{m+1}\dotsc dt_k.
\end{align*}
\end{prpstn}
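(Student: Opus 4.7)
The plan is to evaluate $S_1$ and $S_2$ by expanding the square in $w_n$, swapping the order of summation, and reducing the inner sum to a congruence count or a count of primes in an arithmetic progression. The nominal main-term quadratic forms in $\lambda_{d_1,\dots,d_k}$ that this produces are messy, so the crucial step is to pass to smoother variables $y_{r_1,\dots,r_k}$ which make the quadratic forms essentially diagonal, and then to recognize the resulting sums as Riemann sums for the integrals $I_k(F)$ and $J_k^{(m)}(F)$.

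For $S_1$, I first expand to get $S_1=\sum_{d_1,\dots,d_k,\,e_1,\dots,e_k}\lambda_{\vec d}\lambda_{\vec e}\,\#\{N\le n<2N:n\equiv v_0\pmod W,\ [d_i,e_i]\mid n+h_i\ \forall i\}$. Because the $\lambda$'s vanish unless $\prod d_i$ and $\prod e_i$ are coprime to $W$ and squarefree, the moduli $W,[d_1,e_1],\dots,[d_k,e_k]$ are pairwise coprime (using the distinctness of the $h_i$ modulo primes above $D_0$), so the Chinese remainder theorem makes the inner count equal to $N/(W\prod_i[d_i,e_i])+O(1)$. The $O(1)$ errors sum to $O(R^2(\log R)^{O(1)})=o(N/(\log N)^A)$ since $R^2=N^{\theta-2\delta}$. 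Next I implement the change of variables suggested by the definition of $\lambda_{\vec d}$: setting $y_{\vec r}:=F(\log r_1/\log R,\dots,\log r_k/\log R)$ on squarefree $\vec r$ coprime to $W$, a Möbius inversion identifies $\lambda_{\vec d}$ as (roughly) $\prod_i\mu(d_i)d_i/\phi(d_i)$ times a sum of $y_{\vec r}/\prod_i\phi(r_i)$ over $d_i\mid r_i$. Substituting and collapsing the double sum by summing the multiplicative factors prime-by-prime makes the $\vec d,\vec e$ sum diagonal in $\vec r$, yielding a single sum $S_1\sim(N/W)\sum_{\vec r}y_{\vec r}^2/\prod_i g(r_i)$ for an explicit multiplicative $g$ with $g(p)=\phi(p)^2/p$ on $p>D_0$. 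Replacing $g$ by $\phi$ up to an Euler product that evaluates to $(W/\phi(W))^k$ and interpreting the sum as a Riemann sum on $\mathcal R_k$ in coordinates $t_i=\log r_i/\log R$ produces the claimed $I_k(F)$ main term.

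For $S_2$ the same expansion gives, for each $i$, an inner sum of $\chi_{\mathbb P}(n+h_i)$ over $n$ in an arithmetic progression modulo $q=W\prod_j[d_j,e_j]$. If $[d_i,e_i]>1$ then $[d_i,e_i]\mid n+h_i$ forces $n+h_i=[d_i,e_i]$, which is negligible; so only the diagonal case $d_i=e_i=1$ contributes. For those terms the inner count is $\pi(2N;q,a)-\pi(N;q,a)\sim N/(\phi(q)\log N)$, and the error saved against the trivial bound is summed by Cauchy-Schwarz over the residue classes, giving a total error $\ll N/(\log N)^A$ thanks to the level-of-distribution hypothesis (this is exactly where we need $R^2\le N^{\theta-\epsilon}$). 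So $S_2$ decomposes as $\sum_m S_2^{(m)}$ with $S_2^{(m)}$ a quadratic form in those $\lambda_{\vec d}$ having $d_m=0$'s constraint replaced by $d_m=e_m=1$, weighted by $1/\prod_{j\ne m}\phi([d_j,e_j])$. I now perform a slightly modified change of variables tailored to the $m$-th coordinate: set $y^{(m)}_{r_1,\dots,\widehat{r_m},\dots,r_k}:=\sum_{r_m}y_{\vec r}/\phi(r_m)$ (the sum corresponding to integrating $F$ in $t_m$). The same Möbius-style inversion as before, applied to the $k-1$ remaining coordinates, diagonalizes $S_2^{(m)}$ into $\sum_{r_1,\dots,\widehat{r_m},\dots,r_k}(y^{(m)})^2/\prod_{j\ne m}g(r_j)$. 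Converting to a Riemann sum yields $J_k^{(m)}(F)$, and summing over $m$ gives the stated main term.

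The routine parts are the CRT count, the prime-number-in-AP step (standard for GPY with level of distribution), and the asymptotic evaluation of the Euler-product constants, which all follow the GPY template. The genuinely new obstacle is the multi-dimensional Möbius inversion: one must verify that the sum over primes $p$ of $d_i,e_i$ factors into a product (prime by prime) in which only the diagonal $r_i=s_i$ case survives, despite the support condition $\sum_i\log r_i\le\log R$ coupling the variables. The standard fix is to drop this support restriction when performing the local factorization (incurring only an error coming from the inclusion-exclusion on the support boundary, controlled by the smoothness of $F$ and absorbed into the $(1+o(1))$) and then reinstate it inside the Riemann-sum limit, where $F$'s support on $\mathcal R_k$ automatically enforces the right region. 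Once that is handled cleanly for both $S_1$ and $S_2$, the rest is bookkeeping.
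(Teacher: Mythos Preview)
Your overall architecture matches the paper's proof: expand the square, apply CRT, pass to the dual variables $y_{r_1,\dots,r_k}$, obtain an (essentially) diagonal quadratic form, and read off $I_k(F)$ and $J_k^{(m)}(F)$ as Riemann sums. The $S_2$ outline, including the auxiliary variables $y^{(m)}$ obtained by summing out the $m$th coordinate and the use of the level-of-distribution hypothesis via Cauchy--Schwarz, is also the paper's route.

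There is, however, a genuine gap in your diagonalization step for $S_1$ (and the analogous step for $S_2^{(m)}$). After CRT the main term carries the restriction $(d_i,e_j)=1$ for all $i\ne j$; your remark about the distinctness of the $h_i$ only explains why the \emph{complementary} terms have empty inner sum, it does not remove the restriction from the surviving main term. If you substitute $\lambda$ in terms of $y$ and try to sum over $d_i\mid r_i$, $e_i\mid s_i$ ``prime by prime'', the cross-conditions couple different coordinates and the off-diagonal $(\vec r\ne\vec s)$ contributions are \emph{not} zero. (For instance, with $k=2$ and a prime $p\mid r_1$, $p\mid s_2$, $p\nmid r_2 s_1$, the restricted local sum equals $-1$, not $0$.) The paper handles this by inserting $\sum_{s_{i,j}\mid d_i,e_j}\mu(s_{i,j})$ for each pair $i\ne j$ to remove the cross-conditions, and then shows that any $s_{i,j}\ne 1$ must exceed $D_0$ (being coprime to $W$), so its total contribution is $O(D_0^{-1})$ times the main term. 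Only after this does the sum collapse to $\sum_{\vec u} y_{\vec u}^2/\prod_i\phi(u_i)$. Your proposal omits this mechanism.

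Relatedly, you misidentify the principal obstacle. The support constraint $\sum_i\log r_i\le\log R$ is \emph{not} the issue: the change of variables between $\lambda$ and $y$ is an exact M\"obius inversion (the paper's (5.10)--(5.11)), so the support is carried along without any ``dropping and reinstating''. The genuine difficulty is precisely the cross-coprimality just described. Two smaller points: the diagonal weight in $S_1$ is $1/\phi(r_i)$, not $1/g(r_i)$ with $g(p)=\phi(p)^2/p$; and the factor $(\phi(W)/W)^k$ arises from the condition $(r_i,W)=1$ when converting the sum over $\vec r$ to the integral (via the paper's summation Lemma~6.1), not from comparing $g$ with $\phi$.
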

We recall that if $S_2$ is large compared to $S_1$, then using the GPY method we can show that there are infinitely many integers $n$ such that several of the $n+h_i$ are prime. The following proposition makes this precise.
\begin{prpstn}\label{prpstn:Explicit}
Let the primes have level of distribution $\theta>0$. Let $\delta>0$ and $\mathcal{H}=\{h_1,\dotsc,h_k\}$ be an admissible set. Let $I_k(F)$ and $J_k^{(m)}(F)$ be given as in Proposition \ref{prpstn:MainProp}, and let $\mathcal{S}_k$ denote the set of Riemann-integrable functions $F:[0,1]^k\rightarrow\mathbb{R}$ supported on $\mathcal{R}_k=\{(x_1,\dotsc,x_k)\in[0,1]^k:\sum_{i=1}^kx_i\le 1\}$ with $I_k(F)\ne 0$ and $J_k^{(m)}(F)\ne 0$ for each $m$. Let
\[M_k=\sup_{F\in\mathcal{S}_k}\frac{\sum_{m=1}^kJ_k^{(m)}(F)}{I_k(F)},\qquad\qquad r_k=\Bigl\lceil\frac{\theta M_k}{2}\Bigr\rceil.\]
Then there are infinitely many integers $n$ such that at least $r_k$ of the $n+h_i$ ($1\le i\le k$) are prime. In particular, $\liminf_n(p_{n+r_k-1}-p_n)\le \max_{1\le i,j\le k}(h_i-h_j)$.
\end{prpstn}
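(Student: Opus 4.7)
The plan is to apply Proposition \ref{prpstn:MainProp} to the linear combination
\[
S(N)\;=\;S_2-(r_k-1)\,S_1\;=\;\sum_{\substack{N\le n<2N\\ n\equiv v_0\pmod{W}}}\Bigl(\sum_{i=1}^k\chi_{\mathbb{P}}(n+h_i)-(r_k-1)\Bigr)\Bigl(\sum_{d_i\mid n+h_i\,\forall i}\lambda_{d_1,\dots,d_k}\Bigr)^2
\]
and show $S(N)>0$ for all sufficiently large $N$. Since the squared sieve weight is non-negative, $S(N)>0$ forces some $n\in[N,2N)$ with $\sum_i\chi_{\mathbb{P}}(n+h_i)\ge r_k$, i.e.\ at least $r_k$ of the shifts $n+h_i$ are simultaneously prime. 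The $\liminf$ bound is then a routine consequence: these $r_k$ primes all lie in an interval of length at most $H:=\max_{i,j}(h_i-h_j)$, so if $p_m$ is the smallest of them, the consecutive primes $p_m,p_{m+1},\dots,p_{m+r_k-1}$ all lie in that same interval, giving $p_{m+r_k-1}-p_m\le H$. Letting $N\to\infty$ sends $m\to\infty$.

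Substituting the asymptotics of Proposition \ref{prpstn:MainProp} (with $R=N^{\theta/2-\delta}$, so that $\log R/\log N=\theta/2-\delta$) and factoring, I obtain
\[
S(N)\;=\;\bigl(1+o(1)\bigr)\frac{\phi(W)^k N(\log R)^k}{W^{k+1}}\Bigl[\Bigl(\frac{\theta}{2}-\delta\Bigr)\sum_{m=1}^k J_k^{(m)}(F)\;-\;(r_k-1)\,I_k(F)\Bigr],
\]
for any $F\in\mathcal{S}_k$ used to define the $\lambda_{d_1,\dots,d_k}$. It therefore suffices to produce $F\in\mathcal{S}_k$ and $\delta>0$ making the bracketed quantity strictly positive, equivalently making $(\theta/2-\delta)\sum_m J_k^{(m)}(F)/I_k(F)>r_k-1$.

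The definition $r_k=\lceil\theta M_k/2\rceil$ gives the strict inequality $r_k-1<\theta M_k/2$, which creates the room needed. Using the supremum definition of $M_k$, I would first choose $F\in\mathcal{S}_k$ with $\sum_{m=1}^k J_k^{(m)}(F)/I_k(F)>M_k-\epsilon$ for $\epsilon>0$ small enough that $(\theta/2)(M_k-\epsilon)>r_k-1$, and then fix $\delta>0$ small enough that $(\theta/2-\delta)(M_k-\epsilon)>r_k-1$ still holds. With these choices frozen, the bracketed expression is a positive constant (depending only on $F$, $\delta$, $\theta$, $k$), so $S(N)>0$ for all large $N$, completing the argument.

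The main thing to verify carefully, rather than a real obstacle, is that Proposition \ref{prpstn:MainProp} may be applied with $F$ fixed once and for all independently of $N$: the $o(1)$ in its conclusion depends on $F$, but this is harmless because my choice of $F$ is made before sending $N\to\infty$. No approximation of the supremum $M_k$ is attempted; only a single sub-optimal but nearly-optimal $F$ is needed, and the ceiling in the definition of $r_k$ is exactly what guarantees such an $F$ exists.
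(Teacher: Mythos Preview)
Your proof is correct and follows essentially the same approach as the paper. The only cosmetic difference is that the paper introduces a continuous parameter $\rho$, shows $S_2-\rho S_1>0$ for $\rho=\theta M_k/2-\delta$ with $\delta$ small, and then observes $\lfloor\rho+1\rfloor=\lceil\theta M_k/2\rceil$, whereas you set $\rho=r_k-1$ from the outset and use the inequality $r_k-1<\theta M_k/2$ directly; these are equivalent formulations of the same argument.
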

\begin{proof}[Proof of Proposition \ref{prpstn:Explicit}]
We let $S=S_2-\rho S_1$, and recall that from Section 2 that if we can show $S>0$ for all large $N$, then there are infinitely many integers $n$ such that at least $\lfloor \rho+1\rfloor$ of the $n+h_i$ are prime. 

We put $R=N^{\theta/2-\delta}$ for a small $\delta>0$. By the definition of $M_k$, we can choose $F_0\in\mathcal{S}_k$ such that $\sum_{m=1}^kJ_k^{(m)}(F_0)>(M_k-\delta)I_k(F_0)>0$. Since $F_0$ is Riemann-integrable, there is a smooth function $F_1$ such that $\sum_{m=1}^kJ_k^{(m)}(F_1)>(M_k-2\delta)I_k(F_1)>0$. Using Proposition \ref{prpstn:MainProp}, we can then choose $\lambda_{d_1,\dotsc,d_k}$ such that
\begin{align}
S&=\frac{\phi(W)^kN(\log{R})^k}{W^{k+1}}\Bigl(\frac{\log{R}}{\log{N}}\sum_{j=1}^kJ_k^{(m)}(F_1)-\rho I_k(F_1)+o(1)\Bigr)\nonumber\\
&\ge \frac{\phi(W)^kN(\log{R})^k I_k(F_1)}{W^{k+1}}\Bigl(\Bigl(\frac{\theta}{2}-\delta\Bigr)\Bigl(M_k-2\delta\Bigr)-\rho+o(1)\Bigr).
\end{align}
If $\rho=\theta M_k/2-\epsilon$ then, by choosing $\delta$ suitably small (depending on $\epsilon$), we see that $S>0$ for all large $N$. Thus there are infinitely many integers $n$ for which at least $\lfloor \rho+1\rfloor$ of the $n+h_i$ are prime. Since $\lfloor \rho+1\rfloor= \lceil \theta M_k/2\rceil$ if $\epsilon$ is suitably small, we obtain Proposition \ref{prpstn:Explicit}.
\end{proof}
Thus, if the primes have a fixed level of distribution $\theta$, to show the existence of many of the $n+h_i$ being prime for infinitely many $n\in\mathbb{N}$ we only require a suitable lower bound for $M_k$. The following proposition establishes such a bound for different values of $k$.
\begin{prpstn}\label{prpstn:FChoices}Let $k\in\mathbb{N}$, and $M_k$ be as given by Proposition \ref{prpstn:Explicit}. Then
\begin{enumerate}
\item We have $M_5>2$.
\item We have $M_{105}>4$.
\item If $k$ is sufficiently large, we have $M_k>\log{k}-2\log\log{k}-2$.
\end{enumerate}
\end{prpstn}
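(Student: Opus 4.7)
\medskip
\noindent\emph{Proof sketch.}
The strategy in all three parts is to exhibit an explicit test function $F\in\mathcal{S}_k$ and bound the ratio $\sum_{m=1}^k J_k^{(m)}(F)/I_k(F)$ from below. Since the functionals $I_k$ and $J_k^{(m)}$ are invariant under permutations of the coordinates, there is no loss of generality in restricting attention to symmetric $F$; in that case $\sum_{m=1}^k J_k^{(m)}(F) = k\,J_k^{(1)}(F)$, so the task becomes maximizing the single ratio $k\,J_k^{(1)}(F)/I_k(F)$.

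For parts (1) and (2), the plan is to replace the infinite-dimensional supremum by one over a finite-dimensional subspace $\mathcal{V}_{k,d}\subset\mathcal{S}_k$ of symmetric polynomial test functions of total degree at most $d$, supported on $\mathcal{R}_k$. A convenient basis is
\[\Psi_{a_0,a_1,\dotsc}(t_1,\dotsc,t_k) = \Bigl(1-\sum_{i=1}^k t_i\Bigr)^{a_0}\, e_{a_1,a_2,\dotsc}(t_1,\dotsc,t_k),\]
where $e_{a_1,a_2,\dotsc}$ is a monomial symmetric polynomial in $t_1,\dotsc,t_k$. Writing $F=\sum_\alpha c_\alpha\Psi_\alpha$, the integrals $I_k(F)$ and $J_k^{(1)}(F)$ become explicit quadratic forms $\mathbf{c}^T M_1\mathbf{c}$ and $\mathbf{c}^T M_2\mathbf{c}$ whose matrix entries are Dirichlet-type beta integrals over $\mathcal{R}_k$ and its face $t_1=0$, and hence closed-form rational functions of $k$. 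Maximizing the quotient over $\mathbf{c}$ is then the generalized eigenvalue problem $kM_2\mathbf{c}=\lambda M_1\mathbf{c}$, and the largest eigenvalue $\lambda$ is a lower bound for $M_k$. A modest choice of $d$ (say $d\le 5$) for $k=5$, and a slightly larger $d$ for $k=105$, together with a direct numerical diagonalization, should produce $M_5>2$ and $M_{105}>4$.

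For part (3), the plan is to take a product-form test function
\[F(t_1,\dotsc,t_k) = \Bigl(\prod_{i=1}^k g(t_i)\Bigr)\mathbf{1}_{\mathcal{R}_k}(t_1,\dotsc,t_k)\]
where $g\ge 0$ is a one-variable function depending on parameters to be tuned with $k$. If the simplex cut-off were absent, a direct computation gives $k\,J_k^{(1)}(F)/I_k(F) = k(\int_0^1 g)^2/\int_0^1 g^2$. One then (i) chooses $g$ of the form $g(t)=(1+At)^{-1}\mathbf{1}_{[0,T/k]}(t)$ with $A=A(k)\to\infty$ and $T=T(k)<1$, so that the probability measure with density $g^2/\int g^2$ has mean $O(T/k)$ on $[0,1]$; (ii) uses Markov's inequality to conclude that the product measure $\prod g(t_i)^2$ places a proportion $1-o(1)$ of its mass in $\mathcal{R}_k$, so that the simplex truncation alters both $I_k$ and $J_k^{(1)}$ only by a $1+o(1)$ factor; (iii) evaluates $(\int g)^2/\int g^2$ in closed form in $A$ and $T$ and optimizes over these parameters, obtaining a lower bound of the form $\log k - 2\log\log k - 2$.

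The principal obstacle lies in part (3): the two demands on $g$ are in tension. Concentrating $g$ strongly near $0$ (large $A$, small $T$) renders the simplex cut-off essentially invisible, but tends by Cauchy--Schwarz to shrink the target ratio $(\int g)^2/\int g^2$. The exact asymptotic $\log k - 2\log\log k$ emerges only at the precise scale where the tail bound $\mathrm{Pr}(\sum t_i>1)\ll k\,\mathbb{E}[t_1]$ is just $o(1)$, and it is here that the present method's extra flexibility compared with weights depending only on $\sum t_i$ pays off.
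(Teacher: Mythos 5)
For parts (1) and (2) your plan coincides with the paper's: restrict to a finite-dimensional space of symmetric polynomials on $\mathcal{R}_k$, compute $I_k$ and $\sum_m J_k^{(m)}$ as positive-definite quadratic forms in the polynomial coefficients, and reduce to a generalized eigenvalue problem. The paper parametrizes by the power sums $P_1,P_2$ rather than monomial symmetric polynomials, which is just a choice of basis; the substance is identical.

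For part (3) the skeleton is also the same (product test function $F=\prod_i g(kt_i)$ on $\mathcal{R}_k$, isolate the one-variable ratio $\bigl(\int g\bigr)^2/\int g^2$, show the simplex truncation costs only $o(1)$), but your proposed concentration step has a genuine gap: Markov's inequality does not work. With the correct choice of $g$ --- after rescaling, $g(u)=(1+Au)^{-1}$ on $[0,T]$ with $1+AT=e^A$ and $A\approx\log k$ --- the normalised first moment is
\[
\mu=\frac{\int_0^T u\,g(u)^2\,du}{\int_0^T g(u)^2\,du}=\frac{1}{1-e^{-A}}-\frac{1}{A}=1-\frac{1}{A}+O(e^{-A}),
\]
so $k\,\mathbb{E}[t_1]=\mu\to 1$, and Markov gives only $\Pr\bigl(\sum t_i>1\bigr)\lesssim 1$, which is useless. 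Nor can one rescue the argument by shrinking $\mu$: splitting $\int_0^T g$ at $u=\mu$ and applying Cauchy--Schwarz on each piece gives
\[
\frac{\bigl(\int_0^T g\bigr)^2}{\int_0^T g^2}\;\ll\;\mu\,\bigl(1+\log(T/\mu)\bigr),
\]
so if $\mu=o(1)$ (which you also need to make $T<1$ plausible --- note that $T<1$ itself forces the ratio $\le T<1$ by the crude Cauchy--Schwarz bound $(\int_0^T g)^2\le T\int_0^T g^2$) then the target ratio is $o(\log k)$ and one cannot recover $\log k-2\log\log k$. What is actually needed, and what the paper does, is a second-moment argument: since $\mu<1-T/k$, one bounds the truncation error by multiplying the integrand by $\eta^{-2}\bigl(\frac{1}{k-1}\sum_{i\ge2}u_i-\mu\bigr)^2$ with $\eta=(k-T)/(k-1)-\mu$ and integrating; this Chebyshev-type device exploits the smallness of the variance of $\sum t_i$ rather than the smallness of its mean, and the variance calculation (aided by $u^2g(u)^2\le Tu\,g(u)^2$) is what produces the workable error term $E_k\ll \eta^{-2}\mu T\,k^{-1}\,J'_k$. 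In short: the heuristic is right, but the concentration mechanism must be second-moment, not first-moment.
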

We now prove Theorems \ref{thrm:ManyPrimes}, \ref{thrm:kTuples}, \ref{thrm:Unconditional} and \ref{thrm:EH} from Propositions \ref{prpstn:Explicit}  and \ref{prpstn:FChoices}. 

First we consider Theorem \ref{thrm:Unconditional}. We take $k=105$. By Proposition \ref{prpstn:FChoices}, we have $M_{105}>4$. By the Bombieri-Vinogradov theorem, the primes have level of distribution $\theta=1/2-\epsilon$ for every $\epsilon>0$. Thus, if we take $\epsilon$ sufficiently small, we have $\theta M_{105}/2>1$. Therefore, by Proposition \ref{prpstn:Explicit}, we have $\liminf(p_{n+1}-p_n)\le \max_{1\le i,j\le 105}(h_i-h_j)$ for any admissible set $\mathcal{H}=\{h_1,\dotsc,h_{105}\}$. By computations performed by Thomas Engelsma (unpublished), we can choose\footnote{Explicitly, we can take $\mathcal{H}=\{$0, 10, 12, 24, 28,  30, 34, 42, 48, 52, 54,  64, 70, 72, 78, 82, 90, 94,  100, 112, 114, 118, 120,  124, 132, 138, 148, 154,  168, 174, 178, 180, 184,  190, 192, 202, 204, 208,  220, 222, 232, 234, 250,  252, 258, 262, 264, 268,  280, 288, 294, 300, 310,  322, 324, 328, 330, 334,  342, 352, 358, 360, 364,  372, 378, 384, 390, 394,  400, 402, 408, 412, 418,  420, 430, 432, 442, 444,  450, 454, 462, 468, 472,  478, 484, 490, 492, 498,  504, 510, 528, 532, 534,  538, 544, 558, 562, 570,  574, 580, 582, 588, 594,  598, 600$\}.$ This set was obtained from the website \url{http://math.mit.edu/~primegaps/} maintained by Andrew Sutherland.} $\mathcal{H}$ such that $0\le h_1<\dotsc<h_{105}$ and $h_{105}-h_1=600$. This gives Theorem \ref{thrm:Unconditional}.

If we assume the Elliott-Halberstam conjecture then the primes have level of distribution $\theta=1-\epsilon$. First we take $k=105$, and see that $\theta M_{105}/2>2$ for $\epsilon$ sufficiently small (since $M_{105}>4$). Therefore, by Proposition \ref{prpstn:Explicit}, $\liminf_n(p_{n+2}-p_n)\le \max_{1\le i,j\le 105}(h_i-h_j)$. Thus, choosing the same admissible set $\mathcal{H}$ as above, we see $\liminf_n(p_{n+2}-p_n)\le 600$ under the Elliott-Halberstam conjecture. 

Next we take $k=5$ and $\mathcal{H}=\{0,2,6,8,12\}$, with $\theta=1-\epsilon$ again. By Proposition \ref{prpstn:FChoices} we have $M_5>2$, and so $\theta M_5/2>1$ for $\epsilon$ sufficiently small. Thus, by Proposition \ref{prpstn:Explicit}, $\liminf_n(p_{n+1}-p_n)\le 12$ under the Elliott-Halberstam conjecture. This completes the proof of Theorem \ref{thrm:EH}.

Finally, we consider the case when $k$ is large. For the rest of this section, any constants implied by asymptotic notation will be independent of $k$. By the Bombieri-Vinogradov theorem, we can take $\theta=1/2-\epsilon$. Thus, by Proposition \ref{prpstn:FChoices}, we have for $k$ sufficiently large
\begin{equation}
\frac{\theta M_k}{2}\ge \Bigl(\frac{1}{4}-\frac{\epsilon}{2}\Bigr)\Bigl(\log{k}-2\log\log{k}-2\Bigr).
\end{equation}
We choose $\epsilon=1/k$, and see that $\theta M_k/2> m$ if $k\ge C m^2 e^{4m}$ for some absolute constant $C$ (independent of $m$ and $k$). Thus, for any admissible set $\mathcal{H}=\{h_1,\dotsc,h_k\}$ with $k\ge C m^2 e^{4m}$, at least $m+1$ of the $n+h_i$ must be prime for infinitely many integers $n$. We can choose our set $\mathcal{H}$ to be the set $\{p_{\pi(k)+1},\dotsc,p_{\pi(k)+k}\}$ of the first $k$ primes which are greater than $k$. This is admissible, since no element is a multiple of a prime less than $k$ (and there are $k$ elements, so it cannot cover all residue classes modulo any prime greater than $k$.) This set has diameter $p_{\pi(k)+k}-p_{\pi(k)+1}\ll k\log{k}$. Thus $\liminf_{n}(p_{n+m}-p_n)\ll k\log{k}\ll m^3 e^{4m}$ if we take $k=\lceil Cm^2 e^{4m}\rceil $. This gives Theorem \ref{thrm:ManyPrimes}.

We can now establish Theorem \ref{thrm:kTuples} by a simple counting argument. Given $m$, we let $k=\lceil Cm^2e^{4m}\rceil$ as above. Therefore if $\{h_1,\dotsc,h_k\}$ is admissible, then there exists a subset $\{h_1',\dotsc,h_m'\}\subseteq\{h_1,\dotsc,h_k\}$ with the property that there are infinitely many integers $n$ for which all of the $n+h'_i$ are prime ($1\le i\le m$).

We let $\mathcal{A}_2$ denote the set formed by starting with the given set $\mathcal{A}=\{a_1,\dotsc,a_r\}$, and  for each prime $p\le k$ in turn removing all elements of the residue class modulo $p$ which contains the fewest integers. We see that $\#\mathcal{A}_2\ge r\prod_{p\le k}(1-1/p)\gg_m r$. Moreover, any subset of $\mathcal{A}_2$ of size $k$ must be admissible, since it cannot cover all residue classes modulo $p$ for any prime $p\le k$. We let $s=\#\mathcal{A}_2$, and since $r$ is taken sufficiently large in terms of $m$, we may assume that $s>k$.

We see there are $\binom{s}{k}$ sets $\mathcal{H}\subseteq\mathcal{A}_2$ of size $k$. Each of these is admissible, and so contains at least one subset $\{h_1',\dotsc,h_m'\}\subseteq\mathcal{A}_2$ which satisfies the prime $m$-tuples conjecture. Any admissible set $\mathcal{B}\subseteq\mathcal{A}_2$ of size $m$ is contained in $\binom{s-m}{k-m}$ sets $\mathcal{H}\subseteq\mathcal{A}_2$ of size $k$. Thus there are at least $\binom{s}{k}\binom{s-m}{k-m}^{-1}\gg_m s^m\gg_m r^m$ admissible sets $\mathcal{B}\subseteq\mathcal{A}_2$ of size $m$ which satisfy the prime $m$-tuples conjecture. Since there are $\binom{r}{m}\le r^m$ sets $\{h_1,\dotsc,h_m\}\subseteq\mathcal{A}$, Theorem \ref{thrm:kTuples} holds.

We are left to establish Propositions \ref{prpstn:MainProp} and \ref{prpstn:FChoices}.
\section{Selberg sieve manipulations}
In this section we perform initial manipulations towards establishing Proposition \ref{prpstn:MainProp}. These arguments are multidimensional generalizations of the sieve arguments of \cite{GGPY}. In particular, our approach is based on the elementary combinatorial ideas of Selberg. The aim is to introduce a change of variables to rewrite our sums $S_1$ and $S_2$ in a simpler form.

Throughout the rest of the paper we assume that the primes have a fixed level of distribution $\theta$, and $R=N^{\theta/2-\delta}$. We restrict the support of $\lambda_{d_1,\dotsc,d_k}$ to tuples for which the product $d=\prod_{i=1}^k d_i$ is less than $R$ and also satisfies $(d,W)=1$ and $\mu(d)^2=1$. We note that the condition $\mu(d)^2=1$ implies that $(d_i,d_j)=1$ for all $i\ne j$.
\begin{lmm}\label{lmm:S1Expression1}
Let
\[y_{r_1,\dotsc,r_k}=\Bigl(\prod_{i=1}^k\mu(r_i)\phi(r_i)\Bigr)\sum_{\substack{d_1,\dotsc,d_k\\ r_i|d_i\forall i}}\frac{\lambda_{d_1,\dotsc,d_k}}{\prod_{i=1}^kd_i}.\]
Let $y_{max}=\sup_{r_1,\dotsc,r_k}|y_{r_1,\dotsc,r_k}|$. Then
\[S_1=\frac{N}{W}\sum_{r_1,\dotsc,r_k}\frac{y_{r_1,\dotsc,r_k}^2}{\prod_{i=1}^k\phi(r_i)}+O\Bigl(\frac{y_{max}^2 \phi(W)^k N(\log{R})^k}{W^{k+1} D_0}\Bigr).\]
\end{lmm}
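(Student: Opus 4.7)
The plan is to expand the square, swap summation order, analyze when the inner arithmetic-progression count is nonzero, and then perform a Selberg-type diagonalization via the change of variables to $y$. Expanding gives
\[S_1=\sum_{d_1,\dotsc,d_k}\sum_{e_1,\dotsc,e_k}\lambda_{d_1,\dotsc,d_k}\lambda_{e_1,\dotsc,e_k}\#\bigl\{N\le n<2N:n\equiv v_0\pmod W,\ [d_i,e_i]\mid n+h_i\ \forall i\bigr\}.\]
Because $\lambda$ is supported on squarefree tuples with $(\prod_i d_i,W)=1$, any prime $p\mid(d_i,e_j)$ with $i\ne j$ satisfies $p>D_0$, and the congruences $p\mid n+h_i$ and $p\mid n+h_j$ would force $p\mid h_i-h_j$; since $D_0>\max_{i\ne j}|h_i-h_j|$ for $N$ sufficiently large, this is impossible. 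Hence the inner count vanishes unless $(d_i,e_j)=1$ for all $i\ne j$, in which case the moduli $W,[d_1,e_1],\dotsc,[d_k,e_k]$ are pairwise coprime and the Chinese remainder theorem yields $N/(W\prod_i[d_i,e_i])+O(1)$.

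To diagonalize the main term I would apply the identity $(d_i,e_i)=\sum_{u_i\mid(d_i,e_i)}\phi(u_i)$ in each factor of $\prod_i[d_i,e_i]^{-1}=\prod_i(d_i,e_i)/(d_ie_i)$, swap orders, and provisionally drop the off-diagonal condition $(d_i,e_j)=1$ for $i\ne j$. This gives
\[\frac{N}{W}\sum_{u_1,\dotsc,u_k}\Bigl(\prod_{i=1}^k\phi(u_i)\Bigr)\Bigl(\sum_{\substack{d_1,\dotsc,d_k\\ u_i\mid d_i\forall i}}\frac{\lambda_{d_1,\dotsc,d_k}}{\prod_i d_i}\Bigr)^2,\]
which, after substituting the definition of $y_{u_1,\dotsc,u_k}$ and using $\prod_i\mu(u_i)^2=1$ on the support of $y$, equals exactly $(N/W)\sum_{u_1,\dotsc,u_k}y_{u_1,\dotsc,u_k}^2/\prod_i\phi(u_i)$, matching the stated main term.

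Two error contributions must then be bounded against $y_{max}^2N(\log R)^k/(WD_0)$. The $O(1)$ remainder in the CRT count contributes at most $(\sum_d|\lambda_d|)^2$; inverting the defining relation for $y$ yields the pointwise bound $|\lambda_{d_1,\dotsc,d_k}|\ll y_{max}(\log R)^k\prod_i(d_i/\phi(d_i))$, and combined with standard divisor-type sums over $\prod_i d_i<R$ this gives a total contribution of $\ll y_{max}^2 R^2(\log R)^{O(k)}$, negligible since $R^2\le N^{\theta-2\epsilon}$ with $\theta\le 1$. The hard part, and where the factor $1/D_0$ arises, is the correction from reinstating $(d_i,e_j)=1$ for $i\ne j$: using $\mathbb{1}_{(d_i,e_j)>1}\le\sum_{p>D_0,\,p\mid(d_i,e_j)}1$ together with a union bound over the $O(k^2)$ pairs $(i,j)$, each offending prime $p$ forces $p\mid d_i$ and $p\mid e_j$, which (by squarefreeness, since $p\nmid d_j$ and $p\nmid e_i$) costs a local factor of order $1/p^2$ in the associated quadratic form relative to the full diagonalized main term. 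Summing $\sum_{p>D_0}1/p^2\ll 1/D_0$ then produces the advertised error. The main obstacle is arranging this last step so that the off-diagonal correction factors multiplicatively over the primes $p>D_0$ and produces the clean saving by $1/D_0$, rather than a main-term-sized loss.
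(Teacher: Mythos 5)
Your expansion, CRT analysis, and main-term calculation are all correct, and the observation that the inner count vanishes unless $(d_i,e_j)=1$ for all $i\ne j$ (because a common prime $p>D_0$ would force $p\mid h_i-h_j$) is a clean way to phrase what the paper states as ``the inner sum is empty when the moduli are not pairwise coprime.'' Your main term
\[\frac{N}{W}\sum_{u_1,\dotsc,u_k}\frac{y_{u_1,\dotsc,u_k}^2}{\prod_i\phi(u_i)}\]
is exactly right, and you correctly identify that the whole content of the lemma lies in bounding the correction from reinstating the off-diagonal coprimality.

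However, the step you flag as the ``main obstacle'' is a genuine gap, and the union-bound route as you describe it does not close it. You propose to use $\mathbb{1}_{(d_i,e_j)>1}\le\sum_{p>D_0,\,p\mid(d_i,e_j)}1$ and then sum over pairs $(i,j)$. This is a \emph{pointwise nonnegative} majorant, so to convert it into a bound on the error you must first pass to $|\lambda_{d_1,\dotsc,d_k}\lambda_{e_1,\dotsc,e_k}|$. Once you replace $\lambda$ by $|\lambda|$, the inner sums no longer invert to the $y$-variables at all: the identity defining $y_{r_1,\dotsc,r_k}$ is linear in $\lambda$, not in $|\lambda|$, so the product structure that would have given you the factor $y_{max}^2/\phi(p)^2\cdot\sum_u 1/\prod\phi(u_l)$ is destroyed. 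The best you can then do is the crude bound $|\lambda_{d_1,\dotsc,d_k}|\ll y_{max}(\log R)^k$, which (after the $u$- and $p$-sums) produces an error of order $y_{max}^2 N(\log R)^{4k}/(WD_0)$ rather than $y_{max}^2 N(\log R)^{k}/(WD_0)$ --- off by $(\log R)^{3k}$ and therefore not acceptable. Your instinct that the correction ``costs a local factor of $1/p^2$'' is right in spirit, but that computation goes through only if the signs of $\lambda$ are retained inside the inner sums, which your union bound forbids.

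The device that resolves this --- and is exactly what the paper does --- is to encode the coprimality conditions by M\"obius inversion rather than by the indicator inequality: write $\mathbb{1}_{(d_i,e_j)=1}=\sum_{s_{i,j}\mid(d_i,e_j)}\mu(s_{i,j})$ for every pair $i\ne j$, and expand the product over all pairs. This is an \emph{exact}, sign-preserving identity, so no absolute values are taken on $\lambda$. The conditions $s_{i,j}\mid d_i$ and $s_{i,j}\mid e_j$ split into conditions on the $d$-side and the $e$-side separately, so the inner $d$- and $e$-sums still invert cleanly to $y$ evaluated at $a_j=u_j\prod_{i\ne j}s_{j,i}$ and $b_j=u_j\prod_{i\ne j}s_{i,j}$. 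The main term is the contribution from $s_{i,j}=1$ for all $i\ne j$; in every other term some $s_{i,j}>1$ is coprime to $W$, hence $>D_0$, and the sum $\sum_{s>D_0}\mu(s)^2/\phi(s)^2\ll 1/D_0$ produces precisely the claimed saving with only the $k$ powers of $\log R$ from the $u$-sums. So your outline is correct up to this last step, but you need to replace the union bound with M\"obius inversion to make the error estimate rigorous.
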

\begin{proof}
We expand out the square, and swap the order of summation to give\begin{equation}S_1=\sum_{\substack{N\le n<2N\\ n\equiv v_0\pmod{W}}}\Bigl(\sum_{d_i|n+h_i\forall i}\lambda_{d_1,\dotsc,d_k}\Bigr)^2=\sum_{\substack{d_1,\dotsc, d_k\\e_1,\dotsc, e_k}}\lambda_{d_1,\dotsc,d_k}\lambda_{e_1,\dotsc,e_k}\sum_{\substack{N\le n<2N\\ n\equiv v_0\pmod{W}\\ [d_i,e_i]|n+h_i \forall i}}1.
\end{equation}
We recall that here, and throughout this section,  we are using $[a,b]$ to denote the least common multiple of $a$ and $b$.

By the Chinese remainder theorem, the inner sum can be written as a sum over a single residue class modulo $q=W\prod_{i=1}^k[d_i,e_i]$, provided that the integers $W,[d_1,e_1],\dotsc, [d_k,e_k]$ are pairwise coprime. In this case the inner sum is $N/q+O(1)$. If the integers are not pairwise coprime then the inner sum is empty. This gives
\begin{align}
S_1=\frac{N}{W}\sideset{}{'}\sum_{\substack{d_1,\dotsc,d_k\\e_1,\dotsc,e_k}}\frac{\lambda_{d_1,\dotsc,d_k}\lambda_{e_1,\dotsc,e_k}}{\prod_{i=1}^k[d_i,e_i]}+O\Bigl(\sideset{}{'}\sum_{\substack{d_1,\dotsc,d_k\\e_1,\dotsc,e_k}}|\lambda_{d_1,\dotsc,d_k}\lambda_{e_1,\dotsc,e_k}|\Bigr),
\end{align}
where $\sum'$ is used to denote the restriction that we require $W, [d_1,e_1], \dotsc ,  [d_k,e_k]$ to be pairwise coprime. To ease notation we will put $\lambda_{max}=\sup_{d_1,\dotsc, d_k}|\lambda_{d_1,\dotsc,d_k}|$. We now see that since $\lambda_{d_1,\dotsc,d_k}$ is non-zero only when $\prod_{i=1}^kd_i<R$, the error term contributes
\begin{equation}\ll \lambda_{max}^2\Bigl(\sum_{d<R}\tau_k(d)\Bigr)^2\ll \lambda_{max}^2R^2(\log{R})^{2k},\end{equation}
which will be negligible.

In the main sum we wish to remove the dependencies between the $d_i$ and the $e_j$ variables. We use the identity
\begin{equation}\frac{1}{[d_i,e_i]}=\frac{1}{d_ie_i}\sum_{u_i|d_i,e_i}\phi(u_i)\end{equation}
to rewrite the main term as
\begin{equation}
\frac{N}{W}\sum_{u_1,\dotsc,u_k}\Bigl(\prod_{i=1}^k \phi(u_i)\Bigr)\sideset{}{'}\sum_{\substack{d_1,\dotsc,d_k\\e_1,\dotsc,e_k\\u_i|d_i,e_i\forall i}}\frac{\lambda_{d_1,\dotsc,d_k}\lambda_{e_1,\dotsc,e_k}}{(\prod_{i=1}^kd_i) (\prod_{i=1}^ke_i)}.
\end{equation}
We recall that $\lambda_{d_1,\dotsc,d_k}$ is supported on integers $d_1,\dotsc,d_k$ with $(d_i,W)=1$ for each $i$ and $(d_i,d_j)=1$ for all $i\ne j$. Thus we may drop the requirement that $W$ is coprime to each of the $[d_i,e_i]$ from the summation, since these terms have no contribution. Similarly, we may drop the requirement that the $d_i$ variables are all pairwise coprime, and the requirement that the $e_i$ variables are all pairwise coprime. Thus the only remaining restriction coming from the pairwise coprimality of $W,[d_1,e_1],\dotsc,[d_k,e_k]$ is that $(d_i,e_j)=1$ for all $i\ne j$.

We can remove the requirement that $(d_i,e_j)=1$ by multiplying our expression by $\sum_{s_{i,j}|d_i,e_j}\mu(s_{i,j})$. We do this for all $i, j$ with $i\ne j$. This transforms the main term to
\begin{equation}
\frac{N}{W}\sum_{u_1,\dotsc,u_k}\Bigl(\prod_{i=1}^k\phi(u_i)\Bigr)\sum_{s_{1,2},\dotsc,s_{k,k-1}}\Bigl(\prod_{\substack{1\le i,j\le k\\i\ne j}}\mu(s_{i,j})\Bigr)\sum_{\substack{d_1,\dotsc, d_k\\ e_1,\dotsc,e_k\\ u_i|d_i,e_i\forall i\\ s_{i,j}|d_i,e_j\forall i\ne j}}\frac{\lambda_{d_1,\dotsc,d_k}\lambda_{e_1,\dotsc,e_k}}{(\prod_{i=1}^kd_i)(\prod_{i=1}^k e_i)}.\label{eq:S1MainTerm}
\end{equation}
We can restrict the $s_{i,j}$ to be coprime to $u_i$ and $u_j$, because terms with $s_{i,j}$ not coprime to $u_i$ or $u_j$ make no contribution to our sum. This is because $\lambda_{d_1,\dots,d_k}=0$ unless $(d_i,d_j)=1$. Similarly we can further restrict our sum so that $s_{i,j}$ is coprime to $s_{i,a}$ and $s_{b,j}$ for all $a\ne j$ and $b\ne i$. We denote the summation over $s_{1,2},\dotsc,s_{k,k-1}$ with these restrictions by $\sum^*$.

We now introduce a change of variables to make the estimation of the sum more straightforward. We let
\begin{equation}
y_{r_1,\dotsc,r_k}=\Bigl(\prod_{i=1}^k\mu(r_i)\phi(r_i)\Bigr)\sum_{\substack{d_1,\dotsc,d_k\\ r_i|d_i\forall i}}\frac{\lambda_{d_1,\dotsc,d_k}}{\prod_{i=1}^kd_i}.\label{eq:YLambdaDef}
\end{equation}
This change is invertible. For $d_1,\dotsc,d_k$ with $\prod_{i=1}^kd_i$ square-free we find that
\begin{align}
\sum_{\substack{r_1,\dotsc, r_k\\ d_i|r_i\forall i}}\frac{y_{r_1,\dotsc,r_k}}{\prod_{i=1}^k\phi(r_i)}&=\sum_{\substack{r_1,\dotsc,r_k\\ d_i|r_i\forall i}}\Bigl(\prod_{i=1}^k\mu(r_i)\Bigr)\sum_{\substack{e_1,\dotsc,e_k\\r_i|e_i\forall i}}\frac{\lambda_{e_1,\dotsc,e_k}}{\prod_{i=1}^ke_i}\nonumber\\
&=\sum_{e_1,\dotsc,e_k}\frac{\lambda_{e_1,\dotsc,e_k}}{\prod_{i=1}^ke_i}\sum_{\substack{r_1,\dotsc,r_k\\d_i|r_i\forall i\\ r_i|e_i\forall i}}\prod_{i=1}^k\mu(r_i)=\frac{\lambda_{d_1,\dotsc,d_k}}{\prod_{i=1}^k\mu_i(d_i)d_i}.\label{eq:LambdaYDef}
\end{align}
Thus any choice of $y_{r_1,\dotsc,r_k}$ supported on $r_1,\dotsc,r_k$, with the product $r=\prod_{i=1}
^kr_i$ square-free and satisfying $r<R$ and $(r,W)=1$, will give a suitable choice of $\lambda_{d_1,\dotsc,d_k}$. We let $y_{max}=\sup_{r_1,\dotsc, r_k}|y_{r_1,\dotsc,r_k}|$. Now, since $d/\phi(d)=\sum_{e|d}1/\phi(e)$ for square-free $d$, we find by taking $r'=\prod_{i=1}^kr_i/d_i$ that
\begin{align}
\lambda_{max}&\le \sup_{\substack{d_1,\dotsc,d_k\\ \prod_{i=1}^kd_i \text{ square-free}}}y_{max}\Bigl(\prod_{i=1}^kd_i\Bigr)\sum_{\substack{r_1,\dotsc,r_k \\d_i|r_i\forall i\\ \prod_{i=1}^k r_i<R\\ \prod_{i=1}^k r_i \text{ square-free}}}\Bigl(\prod_{i=1}^k\frac{\mu(r_i)^2}{\phi(r_i)}\Bigr)\nonumber\\
&\le y_{max} \sup_{\substack{d_1,\dotsc,d_k\\ \prod_{i=1}^kd_i \text{ square-free}}}\Bigl(\prod_{i=1}^k\frac{d_i}{\phi(d_i)}\Bigr)\sum_{\substack{r'<R/\prod_{i=1}^kd_i\\ (r',\prod_{i=1}^kd_i)=1}}\frac{\mu(r')^2\tau_k(r')}{\phi(r')}\nonumber\\
&\le y_{max} \sup_{d_1,\dotsc,d_k}\sum_{d|\prod_{i=1}^kd_i}\frac{\mu(d)^2}{\phi(d)}\sum_{\substack{r'<R/\prod_{i=1}^kd_i\\ (r',\prod_{i=1}^kd_i)=1}}\frac{\mu(r')^2\tau_k(r')}{\phi(r')}\nonumber\\
&\le y_{max}\sum_{u<R}\frac{\mu(u)^2\tau_k(u)}{\phi(u)}\ll y_{max}(\log{R})^k.\label{eq:LambdaSize}
\end{align} 
In the last line we have taken $u=dr'$, and used the fact $\tau_k(dr')\ge\tau_k(r')$. Hence the error term $O(\lambda_{max}^2R^2(\log{N})^{2k})$ is of size $O( y_{max}^2R^2(\log{N})^{4k})$.

Substituting our change of variables \eqref{eq:YLambdaDef} into the main term \eqref{eq:S1MainTerm}, and using the above estimate for the error term, we obtain
\begin{align}
S_1&=\frac{N}{W}\sum_{u_1,\dotsc,u_k}\Bigl(\prod_{i=1}^k\phi(u_i)\Bigr)\sideset{}{^*}\sum_{s_{1,2},\dotsc, s_{k,k-1}} \Bigl(\prod_{\substack{1\le i,j\le k\\i\ne j}}\mu(s_{i,j})\Bigr)\Bigl(\prod_{i=1}^k\frac{\mu(a_i)\mu(b_i)}{\phi(a_i)\phi(b_i)}\Bigr)y_{a_1,\dotsc,a_k}y_{b_1,\dotsc,b_k}\nonumber\\
&\qquad+O\Bigl(y_{max}^2R^2(\log{R})^{4k}\Bigr),
\end{align}
where $a_j=u_j\prod_{i\ne j}s_{j,i}$ and $b_j=u_j\prod_{i\ne j}s_{i,j}$. In these expressions we have used the fact that we have restricted $s_{i,j}$ to be coprime to the other terms in the expression for $a_i$ and $b_j$. For the same reason we may rewrite $\mu(a_j)$ as $\mu(u_j)\prod_{i\ne j}\mu(s_{i,j})$, and similarly for $\phi(a_j)$, $\mu(b_j)$ and $\phi(b_j)$. This gives us
\begin{align}
S_1&=\frac{N}{W}\sum_{u_1,\dotsc,u_k}\Bigl(\prod_{i=1}^k\frac{\mu(u_i)^2}{\phi(u_i)}\Bigr)\sideset{}{^*}\sum_{s_{1,2},\dotsc, s_{k,k-1}} \Bigl(\prod_{\substack{1\le i,j\le k\\i\ne j}}\frac{\mu(s_{i,j})}{\phi(s_{i,j})^2}\Bigr)y_{a_1,\dotsc,a_k}y_{b_1,\dotsc,b_k}+O\Bigl(y_{max}^2R^2(\log{R})^{4k}\Bigr).\label{eq:MultiplicativeSplit}
\end{align}
We see that there is no contribution from $s_{i,j}$ with $(s_{i,j},W)\ne 1$ because of the restricted support of $y$. Thus we only need to consider $s_{i,j}=1$ or $s_{i,j}>D_0$. The contribution when $s_{i,j}>D_0$ is
\begin{align}
&\ll \frac{y_{max}^2 N}{W}\Bigl(\sum_{\substack{u<R\\ (u,W)=1}}\frac{\mu(u)^2}{\phi(u)}\Bigr)^k\Bigl(\sum_{s_{i,j}>D_0}\frac{\mu(s_{i,j})^2}{\phi(s_{i,j})^2}\Bigr)\Bigl(\sum_{s\ge 1}\frac{\mu(s)^2}{\phi(s)^2}\Bigr)^{k^2-k-1}\ll \frac{y_{max}^2 \phi(W)^k N (\log{R})^k}{W^{k+1}D_0}.
\end{align}
Thus we may restrict our attention to the case when $s_{i,j}=1$ $\forall i\ne j$. This gives
\begin{equation}
S_1=\frac{N}{W}\sum_{u_1,\dotsc,u_k}\frac{y_{u_1,\dotsc,u_k}^2}{\prod_{i=1}^k\phi(u_i)}+O\left(\frac{y_{max}^2 \phi(W)^k N (\log{R})^k}{W^{k+1}D_0}+y_{max}^2R^2(\log{R})^{4k}\right).
\end{equation}
We recall that $R^2=N^{\theta-2\delta}\le N^{1-2\delta}$ and $W\ll N^{\delta}$, and so the first error term dominates. This gives the result.
\end{proof}
We now consider $S_2$. We write $S_2=\sum_{m=1}^kS_2^{(m)}$, where
\begin{equation}
S_2^{(m)}=\sum_{\substack{N\le n<2N\\ n\equiv v_0\pmod{W}}}\chi_{\mathbb{P}}(n+h_m)\Bigl(\sum_{\substack{d_1,\dotsc, d_k\\ d_i|n+h_i\forall i}}\lambda_{d_1,\dotsc,d_k}\Bigr)^2.
\end{equation}
We now estimate $S_2^{(m)}$ in a similar way to our treatment of $S_1$.
\begin{lmm}\label{lmm:S2Expression1}
Let
\[y^{(m)}_{r_1,\dotsc,r_k}=\Bigl(\prod_{i=1}^k\mu(r_i)g(r_i)\Bigr)\sum_{\substack{d_1,\dotsc,d_k\\ r_i|d_i\forall i\\ d_m=1}}\frac{\lambda_{d_1,\dotsc,d_k}}{\prod_{i=1}^k \phi(d_i)},\]
where $g$ is the totally multiplicative function defined on primes by $g(p)=p-2$. Let $y^{(m)}_{max}=\sup_{r_1,\dotsc,r_k}|y^{(m)}_{r_1,\dotsc,r_k}|$. Then for any fixed $A>0$ we have
\[S_2^{(m)}=\frac{N}{\phi(W)\log{N}}\sum_{r_1,\dotsc,r_k}\frac{(y^{(m)}_{r_1,\dotsc,r_k})^2}{\prod_{i=1}^kg(r_i)}+O\Bigl(\frac{(y^{(m)}_{max})^2 \phi(W)^{k-2} N (\log{N})^{k-2}}{W^{k-1}D_0}\Bigr)+O\Bigl(\frac{y_{max}^2 N}{(\log{N})^A}\Bigr).\]
\end{lmm}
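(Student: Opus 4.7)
My plan is to mirror the proof of Lemma \ref{lmm:S1Expression1} as closely as possible, modifying only the two places where the prime indicator $\chi_{\mathbb{P}}(n+h_m)$ forces a change. After expanding the square and swapping the order of summation, the inner sum over $n$ becomes a prime-counting sum subject to the divisibility constraints $[d_i,e_i]\mid n+h_i$ for each $i$ and $n\equiv v_0\pmod{W}$. Since $n+h_m$ is prime and $[d_m,e_m]\le R\ll N\le n+h_m$, the case $i=m$ forces $d_m=e_m=1$ (otherwise the inner sum is zero). With this reduction and the Chinese remainder theorem, the inner sum collapses to the number of primes in a single residue class modulo $q=W\prod_{i\ne m}[d_i,e_i]$, provided $W,[d_1,e_1],\ldots,[d_k,e_k]$ are pairwise coprime; otherwise the sum is empty.

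The main term of the inner sum is $(\pi(2N)-\pi(N))/\phi(q)\sim N/(\phi(q)\log N)$. For the aggregate error, I would use the bound $\lambda_{max}\ll y_{max}(\log R)^k$ from \eqref{eq:LambdaSize} together with a divisor bound: the number of representations of a given $q$ in the form $W\prod_{i\ne m}[d_i,e_i]$ is $\ll \tau_{3k}(q)$, and Cauchy--Schwarz against the trivial estimate $\ll N/q$ reduces the contribution of the error terms to a constant multiple of
\[y_{max}^2(\log N)^{O(1)}\sum_{q<WR^2}\max_{(a,q)=1}\Bigl|\pi(2N;q,a)-\pi(N;q,a)-\frac{\pi(2N)-\pi(N)}{\phi(q)}\Bigr|.\]
Since $WR^2\le N^{\theta-\delta}$, the level-of-distribution hypothesis \eqref{eq:LevelOfDistribution} bounds this by $O(y_{max}^2 N/(\log N)^A)$ for any fixed $A$.

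The remaining manipulation of the main term is formally parallel to Lemma \ref{lmm:S1Expression1}, with the separation identity $1/[d,e]=(de)^{-1}\sum_{u\mid(d,e)}\phi(u)$ replaced by $1/\phi([d,e])=(\phi(d)\phi(e))^{-1}\sum_{u\mid(d,e)}g(u)$. The latter follows from $\phi([d,e])\phi((d,e))=\phi(d)\phi(e)$ for squarefree $d,e$ together with $\phi=g\ast 1$, where $g$ is the totally multiplicative function defined by $g(p)=p-2$. After this substitution I remove the coprimality constraints $(d_i,e_j)=1$ for $i\ne j$ with $i,j\ne m$ by M\"obius inversion in variables $s_{i,j}$, and apply the change of variables $y^{(m)}_{r_1,\dots,r_k}$, which decouples the $d$- and $e$-sums into a diagonal quadratic form (the constraint $d_m=1$ is consistent with the fact that $y^{(m)}$ is automatically supported on tuples with $r_m=1$). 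The support of $y^{(m)}$ forces each $s_{i,j}$ to be coprime to $W$, so it is either $1$ or exceeds $D_0$; the latter contributes the claimed $(y^{(m)}_{max})^2\phi(W)^{k-2}N(\log N)^{k-2}/(W^{k-1}D_0)$ error, and the former gives the stated main term.

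The main obstacle is the level-of-distribution step: one must group the double sum over $(d_i,e_i)$ by $q$, introduce a $\tau_{3k}$ weight via divisor bounds, and argue that Cauchy--Schwarz (against the trivial $N/q$ bound) can absorb this weight while leaving an error small enough to invoke \eqref{eq:LevelOfDistribution} directly. The rest is essentially bookkeeping identical in structure to the proof of Lemma \ref{lmm:S1Expression1}, via the correspondence $(\mathrm{id},\phi)\mapsto(\phi,g)$.
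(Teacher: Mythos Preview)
Your proposal is correct and follows essentially the same route as the paper's proof: expand and swap, force $d_m=e_m=1$, control the level-of-distribution error via the $\tau_{3k}$ divisor count and Cauchy--Schwarz, then decouple the main term using $1/\phi([d,e])=(\phi(d)\phi(e))^{-1}\sum_{u\mid(d,e)}g(u)$ and the $y^{(m)}$ change of variables, discarding $s_{i,j}>D_0$ at the stated cost. The only cosmetic differences are that the paper states the trivial bound as $E(N,q)\ll N/\phi(q)$ rather than $N/q$, and separates out the prime number theorem approximation $X_N=N/\log N+O(N/(\log N)^2)$ as an explicit final step.
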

\begin{proof}
We first expand out the square and swap the order of summation to give
\begin{equation}
S_2^{(m)}=\sum_{\substack{d_1,\dotsc,d_k\\ e_1,\dotsc,e_k}}\lambda_{d_1,\dotsc,d_k}\lambda_{e_1,\dotsc,e_k}\sum_{\substack{N\le n<2N\\ n\equiv v_0\pmod{W}\\ [d_i,e_i]|n+h_i\forall i}}\chi_\mathbb{P}(n+h_m).
\end{equation}
As with $S_1$, the inner sum can be written as a sum over a single residue class modulo $q=W\prod_{i=1}^k[d_i,e_i]$, provided that $W,[d_1,e_1],\dotsc, [d_k,e_k]$ are pairwise coprime. The integer $n+h_m$ will lie in a residue class coprime to the modulus if and only if $d_m=e_m=1$. In this case the inner sum will contribute $X_N/\phi(q)+O(E(N,q))$, where
\begin{align}
E(N,q)&=1+\sup_{(a,q)=1}\Bigl|\sum_{\substack{N\le n<2N\\ n\equiv a \pmod{q}}}\chi_{\mathbb{P}}(n)-\frac{1}{\phi(q)}\sum_{N\le n<2N}\chi_\mathbb{P}(n)\Bigr|,\\
X_N&=\sum_{N\le n<2N}\chi_{\mathbb{P}}(n).\end{align}
If either one pair of $W, [d_1,e_1],\dotsc, [d_k,e_k]$ share a common factor, or if either $d_m$ or $e_m$ are not $1$, then the contribution of the inner sum is zero. Thus we obtain
\begin{equation}
S_2^{(m)}=\frac{X_N}{\phi(W)}\sideset{}{'}\sum_{\substack{d_1,\dotsc,d_k\\e_1,\dotsc,e_k\\ e_m=d_m=1}}\frac{\lambda_{d_1,\dotsc,d_k}\lambda_{e_1,\dotsc,e_k}}{\prod_{i=1}^k\phi([d_i,e_i])}+O\Bigl(\sum_{\substack{d_1,\dotsc,d_k\\ e_1,\dotsc,e_k}}|\lambda_{d_1,\dotsc,d_k}\lambda_{e_1,\dotsc,e_k}|E(N,q)\Bigr),
\end{equation}
where we have written $q=W\prod_{i=1}^k[d_i,e_i]$.

We first deal with the contribution from the error terms. From the support of $\lambda_{d_1,\dotsc,d_k}$, we see that we only need to consider square-free $q$ with $q<R^2W$. Given a square-free integer $r$, there are at most $\tau_{3k}(r)$ choices of $d_1,\dotsc,d_k,e_1,\dotsc, e_k$ for which $W\prod_{i=1}^k[d_i,e_i]=r$. We also recall from \eqref{eq:LambdaSize} that $\lambda_{max}\ll y_{max}(\log{R})^k$. Thus the error term contributes
\begin{equation}
\ll y_{max}^2(\log{R})^{2k}\sum_{r<R^2W}\mu(r)^2\tau_{3k}(r)E(N,r).
\end{equation}
By Cauchy-Schwarz, the trivial bound $E(N,q)\ll N/\phi(q)$, and our hypothesis that the primes have level of distribution $\theta$, this contributes for any fixed $A>0$
\begin{equation}
\ll y_{max}^2(\log{R})^{2k}\Bigl(\sum_{r<R^2W}\mu(r)^2\tau^2_{3k}(r)\frac{N}{\phi(r)}\Bigr)^{1/2}\Bigl(\sum_{r<R^2W}\mu(r)^2E(N,r)\Bigr)^{1/2}\ll \frac{y_{max}^2N}{(\log{N})^A}.
\end{equation}
We now concentrate on the main sum. As in the treatment of $S_1$ in the proof of Lemma \ref{lmm:S1Expression1}, we rewrite the conditions $(d_{i},e_{j})=1$ by multiplying our expression by $\sum_{s_{i,j}|d_i,e_j}\mu(s_{i,j})$. Again we may restrict $s_{i,j}$ to be coprime to $u_i$, $u_j$, $s_{i,a}$ and $s_{b,j}$ for all $a\ne j$ and $b\ne i$. We denote the summation subject to these restrictions by $\sum^*$. We also split the $\phi([d_i,e_i])$ terms by using the equation (valid for square-free $d_i,e_i$)
\begin{equation}
\frac{1}{\phi([d_i,e_i])}=\frac{1}{\phi(d_i)\phi(e_i)}\sum_{u_i|d_i,e_i}g(u_i),
\end{equation}
where $g$ is the totally multiplicative function defined on primes by $g(p)=p-2$. This gives us
a main term of
\begin{equation}
\frac{X_N}{\phi(W)}\sum_{u_1,\dotsc,u_k}\Bigl(\prod_{i=1}^k g(u_i)\Bigr)\sideset{}{^*}\sum_{s_{1,2},\dotsc, s_{k,k-1}}\Bigl(\prod_{\substack{1\le i,j\le k\\ i\ne j}}\mu(s_{i,j})\Bigr)\sum_{\substack{d_1,\dotsc,d_k\\e_1,\dotsc,e_k\\ u_i|d_i,e_i\forall i\\ s_{i,j}|d_{i},e_{j}\forall i\ne j\\ d_m=e_m=1}}\frac{\lambda_{d_1,\dotsc,d_k}\lambda_{e_1,\dotsc,e_k}}{\prod_{i=1}^k\phi(d_i)\phi(e_i)}.\label{eq:S2MainTerm}
\end{equation}
We have now separated the dependencies between the $e$ and $d$ variables, so again we make a substitution. We let
\begin{equation}
y^{(m)}_{r_1,\dotsc,r_k}=\Bigl(\prod_{i=1}^k \mu(r_i)g(r_i)\Bigr)\sum_{\substack{d_1,\dotsc, d_{k}\\ r_i|d_i\forall i\\ d_m=1}}\frac{\lambda_{d_1,\dotsc,d_k}}{\prod_{i=1}^k\phi(d_i)}.\label{eq:YjDef}
\end{equation}
We note $y^{(m)}_{r_1,\dotsc,r_k}=0$ unless $r_m=1$. Substituting this into \eqref{eq:S2MainTerm}, we obtain a main term of
\begin{equation}
\frac{X_N}{\phi(W)}\sum_{u_1,\dotsc,u_k}\Bigl(\prod_{i=1}^k\frac{\mu(u_i)^2}{g(u_i)}\Bigr)\sideset{}{^*}\sum_{s_{1,2},\dotsc,s_{k,k-1}}\Bigl(\prod_{\substack{1\le i,j\le k\\ i\ne j}}\frac{\mu(s_{i,j})}{g(s_{i,j})^2}\Bigr)y^{(m)}_{a_1,\dotsc,a_k}y^{(m)}_{b_1,\dotsc,b_k},
\end{equation}
where $a_j=u_j\prod_{i\ne j}s_{j,i}$ and $b_j=u_j\prod_{i\ne j}s_{i,j}$ for each $1\le j\le k$. As before, we have replaced  $\mu(a_j)$ with $\mu(u_j)\prod_{i\ne j}\mu(s_{j,i})$ (and similarly for $g(a_j)$, $\mu(b_j)$ and $g(b_j)$). This is valid since terms with $a_j$ or $b_j$ not square-free make no contribution.

We see the contribution from $s_{i,j}\ne 1$ is of size
\begin{align}
&\ll \frac{(y^{(m)}_{max})^2 N}{\phi(W)\log{N}}\Bigr(\sum_{\substack{u<R\\(u,W)=1}}\frac{\mu(u)^2}{g(u)}\Bigr)^{k-1}\Bigl(\sum_{s}\frac{\mu(s)^2}{g(s)^2}\Bigr)^{k(k-1)-1}\sum_{s_{i,j}>D_0}\frac{\mu(s_{i,j})^2}{g(s_{i,j})^2}\nonumber\\
&\ll \frac{(y^{(m)}_{max})^2 \phi(W)^{k-2} N (\log{R})^{k-1}}{W^{k-1} D_0 \log{N}}.
\end{align}
Thus we find that
\begin{equation}
S_2^{(m)}=\frac{X_N}{\phi(W)}\sum_{u_1,\dotsc,u_k}\frac{(y^{(m)}_{u_1,\dotsc,u_k})^2}{\prod_{i=1}^kg(u_i)}+O\Bigl(\frac{(y_{max}^{(m)})^2 \phi(W)^{k-2} N (\log{R})^{k-2}}{D_0W^{k-1}}\bigr)+O\Bigl(\frac{y_{max}^2N}{(\log{N})^A}\Bigr).\label{eq:SmAlmostFinished}
\end{equation}
Finally, by the prime number theorem, $X_N=N/\log{N}+O(N/(\log{N})^2)$. This error term contributes
\begin{equation}
\ll \frac{(y^{(m)}_{max})^2 N}{\phi(W)(\log{N})^2}\Bigl(\sum_{\substack{u<R\\ (u,W)=1}}\frac{\mu(u)^2}{g(u)}\Bigr)^{k-1}\ll \frac{(y^{(m)}_{max})^2 \phi(W)^{k-2} N (\log{R})^{k-3}}{W^{k-1}},
\end{equation}
which can be absorbed into the first error term of \eqref{eq:SmAlmostFinished}. This completes the proof.
\end{proof}
\begin{rmk}
In our proof of Lemma \ref{lmm:S2Expression1} we only really require $\lambda_{d_1,\dotsc,d_k}$ to be supported on $d_1,\dotsc,d_k$ satisfying $\prod_{i\ne j}d_i<R$ for all $j$ instead of $\prod_{i=1}^kd_i<R$. For $k\ge 3$, the numerical benefit of this extension is small and so we do not consider it further.
\end{rmk}
\begin{rmk}
As our result relies on the Bombieri-Vinogradov theorem, the implied constant in the error term is not effectively computable. However, if we restrict the $\lambda_{d_1,\dotsc,d_k}$ to be supported on $d_i$ which are coprime to the largest prime factor of a possible exceptional modulus of a primitive character then we can make this error term (and all others in this paper) effective at the cost of a negligible error.
\end{rmk}
We now relate our new variables $y^{(m)}_{r_1,\dotsc,r_{k}}$ to the $y_{r_1,\dotsc,r_k}$ variables from $S_1$.
\begin{lmm}\label{lmm:YjExpression}
If $r_m=1$ then
\[y^{(m)}_{r_1,\dotsc,r_k}=\sum_{a_m}\frac{y_{r_1,\dotsc,r_{m-1},a_m,r_{m+1},\dotsc,r_k}}{\phi(a_m)}+O\Bigl(\frac{y_{max} \phi(W) \log{R}}{W D_0}\Bigr).\]
\end{lmm}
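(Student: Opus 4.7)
My plan is to invert the change of variables from Lemma~\ref{lmm:S1Expression1} using \eqref{eq:LambdaYDef} to express $\lambda_{d_1,\dots,d_k}/\prod_i\phi(d_i)$ as a sum weighted by the $y_{a_1,\dots,a_k}$, substitute this into the definition of $y^{(m)}_{r_1,\dots,r_k}$, and swap the order of summation. After using $r_m=1$ and $d_m=1$ (which eliminate the $i=m$ factors), the resulting inner sum becomes a product over $i\neq m$ of the independent sums $\sum_{r_i\mid d_i\mid a_i}\mu(d_i) d_i/\phi(d_i)$. A standard multiplicativity argument, reducing to the prime-level identity $\sum_{e\mid p}\mu(e) e/\phi(e)=-1/(p-1)$, evaluates each of these to $\mu(a_i) r_i/\phi(a_i)$ when $r_i\mid a_i$, and to $0$ otherwise.

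Next I would change variables by writing $a_i=r_i b_i$ for $i\ne m$, where $b_i$ is squarefree and coprime to $Wr_i$ (and to the other $a_j$) by the support of $y$. After cancellation of $\mu(r_i)^2$ and separation of the $\phi(a_i)=\phi(r_i)\phi(b_i)$ factors, the identity takes the form
\[
y^{(m)}_{r_1,\dots,r_k} \;=\; \sum_{a_m}\frac{1}{\phi(a_m)}\sum_{b_1,\dots,b_k}y_{a_1,\dots,a_k}\prod_{i\ne m}\frac{g(r_i) r_i\,\mu(b_i)}{\phi(r_i)^2\phi(b_i)^2}.
\]
The main contribution is the $b_i=1$ term for every $i\ne m$, which equals $\prod_{i\ne m}g(r_i)r_i/\phi(r_i)^2$ times $\sum_{a_m}y_{r_1,\dots,a_m,\dots,r_k}/\phi(a_m)$. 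Since every prime dividing any $r_i$ exceeds $D_0$ (as $(r_i,W)=1$), and since $g(p)p/\phi(p)^2 = 1-1/(p-1)^2$, this product equals $1+O(1/D_0)$ by the convergence of $\sum_{p>D_0}1/(p-1)^2$.

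The remaining contributions correspond to tuples $(b_i)$ in which some $b_i>1$. For each such $b_i$ the sum $\sum_{b>1,\,(b,W)=1,\,\mu(b)^2=1} 1/\phi(b)^2$ is $O(1/D_0)$, because the smallest prime factor of $b$ exceeds $D_0$ and the tail converges. Combining these estimates with the standard Euler product bound $\sum_{a_m\le R,\,(a_m,W)=1,\,\mu(a_m)^2=1} 1/\phi(a_m)\ll (\phi(W)/W)\log R$ (and the fact that $|y|\le y_{max}$) bounds the combined deviation from $\sum_{a_m}y_{r_1,\dots,a_m,\dots,r_k}/\phi(a_m)$ by $O(y_{max}\phi(W)\log R/(WD_0))$, as required.

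The main obstacle is essentially bookkeeping rather than analytic: one must carefully track the coprimality constraints inherited from the support of $\lambda$ (pairwise coprimality of the $a_i$ and coprimality to $W$) so that the inner Möbius sums genuinely factor as independent products over primes, and so that the sums over the $b_i$ decouple and can be bounded independently. Once the factorization is set up, the error estimates reduce to routine Mertens-type sums over integers whose prime factors exceed $D_0$.
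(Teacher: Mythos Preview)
Your proposal is correct and follows essentially the same route as the paper's proof: substitute \eqref{eq:LambdaYDef} into the definition of $y^{(m)}_{r_1,\dots,r_k}$, swap the order of summation, evaluate the inner sum over the $d_i$ explicitly to obtain the factor $\prod_{i\ne m}\mu(a_i)r_i/\phi(a_i)$, then isolate the main term $a_i=r_i$ (your $b_i=1$) and bound the remaining contributions using that every $b_i>1$ has smallest prime factor exceeding $D_0$. The only cosmetic difference is that the paper argues directly in the $a_i$ variables (splitting into $a_j=r_j$ versus $a_j>D_0r_j$) rather than introducing the $b_i=a_i/r_i$, but the substance is identical.
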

\begin{proof}
We assume throughout the proof that $r_m=1$. We first substitute our expression \eqref{eq:LambdaYDef} into the definition \eqref{eq:YjDef}. This gives
\begin{equation}
y^{(m)}_{r_1,\dotsc,r_k}=\Bigl(\prod_{i=1}^k\mu(r_i)g(r_i)\Bigr)\sum_{\substack{d_1,\dotsc,d_k\\r_i|d_i\forall i\\d_m=1}}\Bigl(\prod_{i=1}^k\frac{\mu(d_i)d_i}{\phi(d_i)}\Bigr)\sum_{\substack{a_1,\dotsc,a_k\\d_i|a_i\forall i}}\frac{y_{a_1,\dotsc,a_k}}{\prod_{i=1}^k\phi(a_i)}.
\end{equation}
We swap the summation of the $d$ and $a$ variables to give
\begin{equation}
y^{(m)}_{r_1,\dotsc,r_k}=\Bigl(\prod_{i=1}^k\mu(r_i)g(r_i)\Bigr)\sum_{\substack{a_1,\dotsc,a_k\\r_i|a_i\forall i}}\frac{y_{a_1,\dotsc,a_k}}{\prod_{i=1}^k\phi(a_i)}\sum_{\substack{d_1,\dotsc, d_k\\ d_i|a_i, r_i|d_i\forall i\\ d_m=1}}\prod_{i=1}^k\frac{\mu(d_i)d_i}{\phi(d_i)}.
\end{equation}
We can now evaluate the sum over $d_1,\dotsc,d_k$ explicitly. This gives
\begin{align}
y^{(m)}_{r_1,\dotsc,r_k}&=\Bigl(\prod_{i=1}^k\mu(r_i)g(r_i)\Bigr)\sum_{\substack{a_1,\dotsc,a_k\\r_i|a_i\forall i}}\frac{y_{a_1,\dotsc,a_k}}{\prod_{i=1}^k\phi(a_i)}\prod_{i\ne m}\frac{\mu(a_i)r_i}{\phi(a_i)}.
\end{align}
We see that from the support of $y_{a_1,\dotsc,a_k}$ that we may restrict the summation over $a_j$ to $(a_j,W)=1$. Thus either $a_j=r_j$ or $a_j>D_0r_j$. For $j\ne m$, the total contribution from $a_{j}\ne r_{j}$ is
\begin{align}
&\ll y_{max}\Bigl(\prod_{i=1}^kg(r_i)r_i\Bigr) \Bigl(\sum_{\substack{a_{j}>D_0r_{j}\\ r_j|a_j}}\frac{\mu(a_{j})^2}{\phi(a_{j})^2}\Bigr)\Bigl(\sum_{\substack{a_m<R\\ (a_m,W)=1}}\frac{\mu(a_m)^2}{\phi(a_m)}\Bigr)\prod_{\substack{1\le i\le k\\ i\ne j,m}}\Bigl(\sum_{r_i|a_i}\frac{\mu(a_i)^2}{\phi(a_i)^2}\Bigr)\nonumber\\
&\ll \Bigl(\prod_{i=1}^k \frac{g(r_i)r_i}{\phi(r_i)^2}\Bigr)\frac{y_{max}\phi(W)\log{R}}{W D_0}\ll \frac{y_{max} \phi(W) \log{R}}{W D_0}.
\end{align}
Thus we find that the main contribution is when $a_j=r_j$ for all $j\ne m$. We have
\begin{align}
y^{(m)}_{r_1,\dotsc,r_k}=\Bigl(\prod_{i=1}^k\frac{g(r_i)r_i}{\phi(r_i)^2}\Bigr)\sum_{a_m}\frac{y_{r_1,\dotsc, r_{m-1},a_m,r_{m+1},\dotsc,r_k}}{\phi(a_m)}+O\Bigl(\frac{y_{max} \phi(W) \log{R}}{W D_0}\Bigr). 
\end{align}
We note that $g(p)p/\phi(p)^2=1+O(p^{-2})$. Thus, since the contribution is zero unless $\prod_{i=1}^k r_i$ is coprime to $W$, we see that the product in the above expression may be replaced by $1+O(D_0^{-1})$. This gives the result.
\end{proof}
\section{\texorpdfstring{Smooth choice of $y$}{Smooth choice of y}}
We now choose suitable values for our $y$ variables, and complete the proof of Proposition \ref{prpstn:MainProp}.

We first give some comments to motivate our choice of the $y$ variables, which we believe should be close to optimal. We wish to choose $y$ so as to maximize the ratio of the main terms of $S_2$ and $S_1$. If we use Lagrangian multipliers to maximize this ratio (treating all error terms as zero) we arrive at the condition that
\begin{equation}
\lambda y_{r_1,\dotsc,r_k}=\Bigl(\prod_{i=1}^k \frac{\phi(r_i)}{g(r_i)}\Bigr)\sum_{m=1}^k\frac{g(r_m)}{\phi(r_m)}y^{(m)}_{r_1,\dotsc,r_{m-1},1,r_{m+1},\dotsc,r_k}
\end{equation}
for some fixed constant $\lambda$. The $y$ terms are supported on integers free of small prime factors, and for most integers $r$ free of small prime factors we have $g(r)\approx\phi(r)\approx r$, and so the above condition reduces to
\begin{equation}
\lambda y_{r_1,\dotsc,r_k}\approx\sum_{m=1}^{k}y^{(m)}_{r_1,\dotsc,r_{m-1},1,r_{m+1},\dotsc,r_k}.
\end{equation}
This condition looks smooth (it has no dependence on the prime factorization of the $r_i$), and should be able to be satisfied if $y_{r_1,\dotsc,r_k}$ is a smooth function of the $r_i$ variables. Motivated by the above, when the product $r=\prod_{i=1}^k r_i$ satisfies $(r,W)=1$ and $\mu(r)^2=1$ we choose
\begin{equation}
y_{r_1,\dotsc,r_k}=F\Bigl(\frac{\log{r_1}}{\log{R}},\dotsc,\frac{\log{r_k}}{\log{R}}\Bigr),\label{eq:YChoice}
\end{equation}
for some smooth function $F:\mathbb{R}^k\rightarrow\mathbb{R}$, supported on $\mathcal{R}_k=\{(x_1,\dotsc,x_k)\in[0,1]^k:\sum_{i=1}^kx_i\le 1\}$. As previously required, we set $y_{r_1,\dotsc,r_k}=0$ if the product $r$ is either not coprime to $W$ or is not square-free. With this choice of $y$, we can obtain suitable asymptotic estimates for $S_1$ and $S_2$.

We will use the following Lemma to estimate our sums $S_1$ and $S_2$ with this choice of $y$.
\begin{lmm}\label{lmm:PartialSummation}
Let $A_1,A_2,L>0$. Let $\gamma$ be a multiplicative function satisfying
\[0\le \frac{\gamma(p)}{p}\le 1-A_1,\]
and
\[-L\le \sum_{w\le p\le z}\frac{\gamma(p)\log{p}}{p}- \log{z/w}\le A_2\]
for any $2\le w\le z$. Let $g$ be the totally multiplicative function defined on primes by $g(p)=\gamma(p)/(p-\gamma(p))$. Finally, let $G:[0,1]\rightarrow \mathbb{R}$ be smooth, and let $G_{max}=\sup_{t\in [0,1]}(|G(t)|+|G'(t)|)$. Then
\[\sum_{d<z}\mu(d)^2g(d)G\Bigl(\frac{\log{d}}{\log{z}}\Bigr)=\mathfrak{S}\log{z}\int_0^1G(x)dx+O_{A_1,A_2}(\mathfrak{S}LG_{max}),\]
where
\[\mathfrak{S}=\prod_{p}\Bigl(1-\frac{\gamma(p)}{p}\Bigr)^{-1}\Bigl(1-\frac{1}{p}\Bigr).\]
Here the constant implied by the `$O$' term is independent of $G$ and $L$.
\end{lmm}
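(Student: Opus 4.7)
The strategy is to first establish an asymptotic for the unweighted sum
\[S(z) := \sum_{d<z}\mu(d)^2 g(d),\]
and then apply Stieltjes integration by parts to incorporate the weight $G$. This is the standard route for lemmas of this type (cf.\ Halberstam--Richert), but the hypotheses on $\gamma$ here are arranged so that the error term can be tracked explicitly in terms of $L$ and $G_{\max}$.

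For the unweighted asymptotic, I would work with the Dirichlet series $F(s) = \sum_d \mu(d)^2 g(d) d^{-s} = \prod_p (1 + g(p)p^{-s})$. Since $1 + g(p) = p/(p-\gamma(p))$, the ratio
\[H(s) := F(s)\zeta(s+1)^{-\kappa} = \prod_p\Bigl(1+\frac{g(p)}{p^s}\Bigr)\Bigl(1-\frac{1}{p^{s+1}}\Bigr)^\kappa\]
extends analytically to a neighborhood of $\Re s = 0$ with $H(0)=\mathfrak{S}$; expanding logarithms and using $g(p) = \gamma(p)/p + O(p^{-2})$ together with the Mertens-type hypothesis shows that $|H(s) - \mathfrak{S}| \ll_{A_1,A_2,\kappa} L$ on this neighborhood. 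A Selberg--Delange type Tauberian argument (or Perron with a contour shifted left of $s=0$, picking up the branch singularity of $\zeta(s+1)^\kappa$ of order $\kappa$) then yields
\[S(z) = \mathfrak{S}\frac{(\log z)^\kappa}{\Gamma(\kappa+1)} + O_{A_1,A_2,\kappa}\bigl(\mathfrak{S} L(\log z)^{\kappa-1}\bigr).\]
An alternative elementary route, which may in fact be cleaner here, uses $\log d = \sum_{p\mid d}\log p$ for squarefree $d$ to obtain the Buchstab-type identity
\[\sum_{d<z}\mu(d)^2 g(d)\log d = \sum_{p<z} g(p)\log p \sum_{\substack{e<z/p\\ (e,p)=1}}\mu(e)^2 g(e),\]
which combined with the hypothesis on $\gamma$ can be iterated (induction on $\kappa$, bootstrapping from a crude upper bound) to the same asymptotic.

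With $S(z)$ in hand, I would write the weighted sum as a Stieltjes integral and integrate by parts:
\[\sum_{d<z}\mu(d)^2 g(d)\,G\Bigl(\frac{\log d}{\log z}\Bigr) = G(1)S(z) - \frac{1}{\log z}\int_1^z S(t)\,G'\Bigl(\frac{\log t}{\log z}\Bigr)\frac{dt}{t}.\]
Substituting the asymptotic for $S(t)$ and making the change of variable $u = \log t/\log z$ reduces the integral to $\mathfrak{S}(\log z)^\kappa \Gamma(\kappa+1)^{-1}\int_0^1 u^\kappa G'(u)\,du$, up to an error of size $O(\mathfrak{S} L G_{\max} (\log z)^{\kappa-1})$. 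A final integration by parts $\int_0^1 u^\kappa G'(u)\,du = G(1) - \kappa\int_0^1 u^{\kappa-1}G(u)\,du$ cancels the $G(1)S(z)$ boundary contribution and delivers the claimed main term $\mathfrak{S}(\log z)^\kappa\Gamma(\kappa)^{-1}\int_0^1 u^{\kappa-1}G(u)\,du$. Since $\sup|G|$ and $\sup|G'|$ each appear only linearly, they combine into a single factor of $G_{\max}$ in the error.

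The main obstacle is producing the unweighted asymptotic with the sharp error $O(\mathfrak{S} L(\log z)^{\kappa-1})$: the Perron route requires careful control of $H(s)$ in a region where $\zeta(s+1)^\kappa$ has a branch singularity (with $\kappa$ not necessarily an integer), while the Buchstab iteration requires a careful induction to avoid losing logarithmic factors and to correctly surface the discrepancy $L$. Everything downstream --- partial summation, the change of variable, and the final integration by parts in $u$ --- is then routine.
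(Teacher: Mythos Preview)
The paper does not prove this lemma at all: its entire proof reads ``This is \cite[Lemma 4]{GGPY}, with slight changes to the notation.'' So there is no in-paper argument to compare against; the result is simply imported.

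Your outline is mathematically sound and is indeed the standard route to such estimates. The partial-summation reduction to the unweighted sum $S(z)$ is exactly right, and your final computation --- using $\int_0^1 u^\kappa G'(u)\,du = G(1) - \kappa\int_0^1 u^{\kappa-1}G(u)\,du$ together with $\Gamma(\kappa+1)=\kappa\Gamma(\kappa)$ --- correctly recovers the main term. For the unweighted asymptotic with error $O(\mathfrak{S}L(\log z)^{\kappa-1})$, of your two proposed routes the elementary Buchstab-type iteration (essentially the approach of Halberstam--Richert, Theorem A.1) is the one actually used in the cited GGPY paper; the Selberg--Delange/Perron route works in principle but, as you note, keeping the error uniform in $L$ through the branch singularity of $\zeta(s+1)^\kappa$ for non-integer $\kappa$ is more delicate than the elementary argument. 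One small inaccuracy: the control you need is not literally $|H(s)-\mathfrak{S}|\ll L$ but rather a bound on the partial-sum discrepancy that the second hypothesis on $\gamma$ supplies directly; this is precisely what the elementary iteration exploits.
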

\begin{proof}
This is \cite[Lemma 4]{GGPY}, with $\kappa=1$ and slight changes to the notation.
\end{proof}
We now finish our estimations of $S_1$ and $S_2^{(m)}$, completing  the proof of Proposition \ref{prpstn:MainProp}. We first estimate $S_1$.
\begin{lmm}\label{lmm:S1Summation2}
Let $y_{r_1,\dotsc,r_k}$ be given in terms of a smooth function $F$ by \eqref{eq:YChoice}, with $F$ supported on $\mathcal{R}_k=\{(x_1,\dotsc,x_k)\in[0,1]^k:\sum_{i=1}^kx_i\le 1\}$. Let 
\[F_{max}=\sup_{(t_1,\dotsc,t_k)\in[0,1]^k}|F(t_1,\dotsc,t_k)|+\sum_{i=1}^k|\frac{\partial F}{\partial t_i}(t_1,\dotsc,t_k)|. \]
Then we have
\[S_1=\frac{\phi(W)^k N (\log{R})^k}{W^{k+1}}I_k(F)+O\Bigl(\frac{F_{max}^2 \phi(W)^{k} N (\log{R})^k}{W^{k+1} D_0}\Bigr),\]
where
\[I_k(F)=\int_0^1\dotsi\int_0^1F(t_1,\dotsc,t_k)^2dt_1\dotsc dt_k.\]
\end{lmm}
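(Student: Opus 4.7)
The plan is to substitute the ansatz $y_{r_1,\dotsc,r_k} = F(\log r_1/\log R,\dotsc,\log r_k/\log R)$ into the expression from Lemma \ref{lmm:S1Expression1} and thereby reduce the proof to evaluating the main sum
\[
T \;=\; \sum_{\substack{r_1,\dotsc,r_k\\ \mu^2(r_1\cdots r_k)=1,\ (r_1\cdots r_k,W)=1}} \frac{F(\log r_1/\log R,\dotsc,\log r_k/\log R)^2}{\prod_{i=1}^k \phi(r_i)}.
\]
Since $y_{max}\le F_{max}$, Lemma \ref{lmm:S1Expression1} (or the sharper bound recorded inside its proof) already furnishes a remainder of the required shape. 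The asymptotic for $T$ itself will then be extracted via $k$ iterated applications of Lemma \ref{lmm:PartialSummation}, performed with $\gamma(p)=\mathbf{1}_{p\nmid W}$, for which $g(p)=1/\phi(p)$ on primes $p\nmid W$, the density parameter is $\kappa=1$, and the singular series is $\mathfrak{S}=\phi(W)/W$; the parameter $L$ is $O(\log D_0)$ by Mertens applied to the primes $\le D_0$ dividing $W$.

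The technical core splits into two parts. First I would decouple the pairwise coprimality by letting $T'$ denote the identical sum but with only $\mu^2(r_i)=1$ imposed on each $r_i$ individually. The difference $T'-T$ is supported on tuples for which some prime $p$ divides two of the $r_i$, and the constraint $(r_i,W)=1$ forces $p>D_0$; a short pigeonhole / M\"obius estimate then gives
\[
T'-T \;\ll\; F_{max}^2\Bigl(\tfrac{\phi(W)}{W}\Bigr)^{k}(\log R)^{k}\sum_{p>D_0}\frac{1}{(p-1)^2}\;\ll\;\frac{F_{max}^2}{D_0}\Bigl(\tfrac{\phi(W)}{W}\Bigr)^{k}(\log R)^{k},
\]
which after multiplication by $N/W$ matches the stated error term (and is in fact the source of the $1/D_0$ factor). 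Second, I would evaluate $T'$ by applying Lemma \ref{lmm:PartialSummation} to the sum over $r_1$ first (treating $F^2(\cdot,t_2,\dotsc,t_k)$ as the test function $G$, whose $G_{max}$ is $\ll F_{max}^2$), extracting a factor $(\phi(W)/W)(\log R)\int_0^1 F^2(t_1,t_2,\dotsc,t_k)\,dt_1$, and then iterating over $r_2,\dotsc,r_k$. This accumulates $k$ factors of $(\phi(W)/W)\log R$ and produces the integral $I_k(F)$, yielding the claimed main term after multiplication by $N/W$.

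The main obstacle is the error bookkeeping: one must verify that all three error contributions---(i) the remainder from Lemma \ref{lmm:S1Expression1}, (ii) the coprimality relaxation $T\to T'$, and (iii) the per-step error from Lemma \ref{lmm:PartialSummation} propagating through the $k$-fold iteration---fit inside $F_{max}^2 \phi(W)^k N(\log R)^k/(W^{k+1}D_0)$. Error (ii) is engineered to match; error (i) matches once one uses the sharp form available in the proof of Lemma \ref{lmm:S1Expression1}; and error (iii) is $O(F_{max}^2 (\phi(W)/W)^k (\log R)^{k-1}\log D_0 \cdot N/W)$, which is dominated by the target since $D_0\log D_0 = o(\log R)$ (recall $D_0=\log\log\log N$ and $\log R\asymp\log N$). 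This last domination is the only delicate quantitative point of the argument.
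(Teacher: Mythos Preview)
Your proposal is correct and follows essentially the same route as the paper: substitute \eqref{eq:YChoice} into Lemma \ref{lmm:S1Expression1}, drop the pairwise coprimality $(r_i,r_j)=1$ at cost $O(F_{max}^2(\phi(W)/W)^k(\log R)^k/D_0)$ via the $\sum_{p>D_0}(p-1)^{-2}$ bound, and then iterate Lemma \ref{lmm:PartialSummation} with $\gamma(p)=\mathbf{1}_{p\nmid W}$, $\kappa=1$, $\mathfrak{S}=\phi(W)/W$, $L\ll\log D_0$. Your observation that one must invoke the sharper error $O(y_{max}^2\phi(W)^kN(\log R)^k/(W^{k+1}D_0))$ available inside the proof of Lemma \ref{lmm:S1Expression1}, rather than its weaker stated form, is exactly what the paper does (see \eqref{eq:S1BasicExpression}), and your check that $D_0\log D_0=o(\log R)$ absorbs the iteration error is the only point the paper leaves implicit.
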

\begin{proof}
We substitute our choice \eqref{eq:YChoice} of $y$ into our expression of $S_1$ in terms of $y_{r_1,\dotsc,r_k}$ given by Lemma \ref{lmm:S1Expression1}. This gives
\begin{equation}
S_1=\frac{N}{W}\sum_{\substack{u_1,\dotsc,u_k\\ (u_i,u_j)=1\forall i\ne j\\ (u_i,W)=1\forall i}}\Bigl(\prod_{i=1}^{k}\frac{\mu(u_i)^2}{\phi(u_i)}\Bigr)F\Bigl(\frac{\log{u_1}}{\log{R}},\dotsc,\frac{\log{u_k}}{\log{R}}\Bigr)^2+O\Bigl(\frac{F_{max}^2 \phi(W)^k N (\log{R})^k}{W^{k+1} D_0}\Bigr).\label{eq:S1BasicExpression}
\end{equation}
We note that two integers $a$ and $b$ with $(a,W)=(b,W)=1$ but $(a,b)\ne 1$ must have a common prime factor which is greater than $D_0$. Thus we can drop the requirement that $(u_{i},u_{j})=1$, at the cost of an error of size
\begin{align}
&\ll \frac{F_{max}^2 N}{W}\sum_{p>D_0}\sum_{\substack{u_1,\dotsc,u_k<R\\ p|u_{i},u_{j}\\ (u_i,W)=1\forall i}}\prod_{i=1}^k\frac{\mu(u_i)^2}{\phi(u_i)}\nonumber\\
&\ll \frac{F_{max}^2 N}{W}\sum_{p>D_0}\frac{1}{(p-1)^2}\Bigl(\sum_{\substack{u<R\\ (u,W)=1}}\frac{\mu(u)^2}{\phi(u)}\Bigr)^k\ll \frac{F_{max}^2 \phi(W)^k N (\log{R})^k}{W^{k+1} D_0}.\label{eq:S1CoprimeError}
\end{align}
Thus we are left to evaluate the sum
\begin{equation}
\sum_{\substack{u_1,\dotsc,u_k\\ (u_i,W)=1\forall i}}\Bigl(\prod_{i=1}^{k}\frac{\mu(u_i)^2}{\phi(u_i)}\Bigr)F\Bigl(\frac{\log{u_1}}{\log{R}},\dotsc,\frac{\log{u_k}}{\log{R}}\Bigr)^2.
\end{equation}
We can now estimate this sum by $k$ applications of Lemma \ref{lmm:PartialSummation}, dealing with the sum over each $u_i$ in turn. For each application we take
\begin{align}
\gamma(p)&=\begin{cases}1,\qquad &p\nmid W,\\
0,&\text{otherwise,}\end{cases}\\
L&\ll 1+\sum_{p|W}\frac{\log{p}}{p}\ll \log{D_0},
\end{align}
and $A_1$ and $A_2$ fixed constants of suitable size. This gives
\begin{align}
\sum_{\substack{u_1,\dotsc,u_k\\ (u_i,W)=1\forall i}}\Bigl(\prod_{i=1}^{k}\frac{\mu(u_i)^2}{\phi(u_i)}\Bigr)F\Bigl(\frac{\log{u_1}}{\log{R}},\dotsc,\frac{\log{u_k}}{\log{R}}\Bigr)^2&=\frac{\phi(W)^k (\log{R})^k}{W^k}I_k(F)\nonumber\\
&\qquad+O\Bigl(\frac{F_{max}^2\phi(W)^k(\log{D_0})(\log{R})^{k-1}}{W^k}\Bigr).\label{eq:S1SmoothSum}
\end{align}
We now combine \eqref{eq:S1SmoothSum} with \eqref{eq:S1BasicExpression} and \eqref{eq:S1CoprimeError} to obtain the result.
\end{proof}

\begin{lmm}
Let $y_{r_1,\dotsc,r_k}$, $F$ and $F_{max}$ be as described in Lemma \ref{lmm:S1Summation2}. Then we have
\[S_2^{(m)}=\frac{\phi(W)^k N (\log{R})^{k+1}}{W^{k+1}\log{N}}J^{(m)}_k(F)+O\Bigl(\frac{F_{max}^2\phi(W)^{k}N(\log{R})^k}{W^{k+1} D_0}\Bigr),\]
where
\[J^{(m)}_k(F)=\int_0^1\dotsi\int_0^1\Bigr(\int_0^1F(t_1,\dotsc,t_k)dt_m\Bigr)^2dt_1\dotsc dt_{m-1}dt_{m+1}\dotsc dt_k.\]
\end{lmm}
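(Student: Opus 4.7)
The strategy parallels the proof of Lemma \ref{lmm:S1Summation2}, but with an extra preparatory step, since the formula for $S_2^{(m)}$ in Lemma \ref{lmm:S2Expression1} is expressed in the variables $y^{(m)}$ rather than $y$. I first substitute our smooth choice $y_{r_1,\dots,r_k}=F(\log r_1/\log R,\dots,\log r_k/\log R)$ into Lemma \ref{lmm:YjExpression}. The inner sum over $a_m$ runs over $a_m$ squarefree, coprime to $W$, coprime to each $r_i$ for $i\ne m$, and bounded by $R/\prod_{i\ne m}r_i$ by the support of $F$. Relaxing the coprimality condition $(a_m,\prod_{i\ne m}r_i)=1$ costs only $O(F_{max}\phi(W)\log R/(WD_0))$ (any shared prime factor must exceed $D_0$), after which Lemma \ref{lmm:PartialSummation} applies with $\gamma(p)=\mathbf{1}_{p\nmid W}$, $\kappa=1$, $L\ll\log D_0$, and $\mathfrak{S}=\phi(W)/W$. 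A change of variable $t_m=x\log(R/\prod_{i\ne m}r_i)/\log R$ together with the support of $F$ yields
\[y^{(m)}_{r_1,\dots,1,\dots,r_k}=\frac{\phi(W)\log R}{W}F_m\Bigl(\frac{\log r_1}{\log R},\dots,\frac{\log r_k}{\log R}\Bigr)+O\Bigl(\frac{F_{max}\phi(W)\log R}{WD_0}\Bigr),\]
where $F_m(\vec t)=\int_0^1F(t_1,\dots,t_k)\,dt_m$ is supported on $\sum_{i\ne m}t_i\le 1$.

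I then square this expression and substitute into Lemma \ref{lmm:S2Expression1}, so that the main term becomes
\[\frac{N}{\phi(W)\log N}\Bigl(\frac{\phi(W)\log R}{W}\Bigr)^{2}\sum_{\substack{r_1,\dots,r_k\\ r_m=1}}\frac{F_m\bigl(\log r_i/\log R\bigr)^2}{\prod_{i\ne m}g(r_i)}.\]
After dropping the pairwise coprimality of the remaining $r_i$ exactly as in Lemma \ref{lmm:S1Summation2}, I apply Lemma \ref{lmm:PartialSummation} once per free variable with the weight $\tilde g(p)=1/(p-2)$ for $p\nmid W$, corresponding to $\gamma(p)=p/(p-1)$, $\kappa=1$, and sieve constant
\[\mathfrak{S}'=\prod_{p|W}\Bigl(1-\tfrac{1}{p}\Bigr)\prod_{p\nmid W}\frac{(p-1)^2}{(p-2)p}=\frac{\phi(W)}{W}\bigl(1+O(1/D_0)\bigr),\]
using $(p-1)^2/((p-2)p)=1+O(1/p^2)$ and $\sum_{p>D_0}p^{-2}\ll 1/D_0$. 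Each of the $k-1$ applications contributes one factor $(\phi(W)/W)(\log R)(1+O(1/D_0))$ and one integration, whose upper limit extends to $1$ via the support of $F_m$.

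Assembling everything produces a main term of
\[\frac{N}{\phi(W)\log N}\Bigl(\frac{\phi(W)\log R}{W}\Bigr)^{k+1}J_k^{(m)}(F)=\frac{\phi(W)^kN(\log R)^{k+1}}{W^{k+1}\log N}J_k^{(m)}(F),\]
up to a cumulative multiplicative factor $1+O(1/D_0)$. The remaining error contributions (the errors from Lemma \ref{lmm:S2Expression1} with $y^{(m)}_{max}\ll F_{max}\phi(W)\log R/W$ and $y_{max}\le F_{max}$, the coprimality relaxations, and the partial summation remainders with $D_0\log D_0\ll\log R$) all fit within the claimed bound $O(F_{max}^2\phi(W)^kN(\log R)^k/(W^{k+1}D_0))$; the term $O(y_{max}^2N/(\log N)^A)$ in Lemma \ref{lmm:S2Expression1} is absorbed by taking $A$ large. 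The main subtlety is the bookkeeping around $\mathfrak{S}'$: verifying that the apparently extraneous factor $c_W=\prod_{p\nmid W}(p-1)^2/((p-2)p)$ satisfies $c_W^{k-1}=1+O(1/D_0)$ is essential to recovering the clean $\phi(W)^k/W^{k+1}$ in the main term, and this relies crucially on the choice $W=\prod_{p\le D_0}p$ with $D_0\to\infty$.
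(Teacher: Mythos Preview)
Your overall strategy matches the paper's, and most of the bookkeeping is fine, but there is a genuine gap at the point where you drop the condition $(a_m,\prod_{i\ne m}r_i)=1$ before applying Lemma~\ref{lmm:PartialSummation}. You assert that this relaxation costs only $O(F_{max}\phi(W)\log R/(WD_0))$ pointwise in $r_1,\dots,r_k$, with the parenthetical that ``any shared prime factor must exceed $D_0$''. In fact the cost is
\[
\ll F_{max}\,\frac{\phi(W)\log R}{W}\sum_{p\mid \prod_{i\ne m}r_i}\frac{1}{p-1},
\]
and this prime sum is \emph{not} uniformly $O(1/D_0)$. If $\prod_{i\ne m}r_i$ is taken to be the product of all primes in $(D_0,P]$ with $P\approx\log R$ (which is allowed by the support, since then $\prod r_i\approx R$), Mertens' theorem gives $\sum_{D_0<p\le P}1/p\approx\log\log\log N=D_0$. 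So for such $r_i$ your pointwise error is of size $F_{max}\phi(W)(\log R)D_0/W$, i.e.\ larger than the main term, and the displayed formula for $y^{(m)}_{r_1,\dots,r_k}$ fails.

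The paper avoids this by \emph{not} relaxing the coprimality at this stage: it applies Lemma~\ref{lmm:PartialSummation} to the $a_m$-sum with $\gamma(p)=\mathbf{1}_{p\nmid W\prod_i r_i}$ and $L\ll\log\log N$, obtaining
\[
y^{(m)}_{r_1,\dots,r_k}=\frac{\phi(W)\log R}{W}\Bigl(\prod_{i}\frac{\phi(r_i)}{r_i}\Bigr)F_m\Bigl(\tfrac{\log r_1}{\log R},\dots\Bigr)+O\Bigl(\tfrac{F_{max}\phi(W)\log R}{WD_0}\Bigr),
\]
which \emph{is} valid pointwise; the extra factor $\prod\phi(r_i)/r_i$ then merges with $1/\prod g(r_i)$ to give a weight whose sieve constant is still $(\phi(W)/W)(1+O(1/D_0))$. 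Alternatively, your relaxation can be rescued \emph{on average}: if you keep the error in the displayed form and only estimate it after inserting into Lemma~\ref{lmm:S2Expression1} and summing over $r_1,\dots,r_k$, swapping the $r$-sum with the $p$-sum produces $\sum_{p>D_0}1/(p(p-2))\ll 1/D_0$ and the claimed final error follows. Either repair is short, but as written the argument rests on a pointwise bound that does not hold.
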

\begin{proof}
The estimation of $S_2^{(m)}$ is similar to the estimation of $S_1$. We first estimate $y^{(m)}_{r_1,\dotsc,r_k}$. We recall that $y^{(m)}_{r_1,\dotsc,r_k}=0$ unless $r_m=1$ and $r=\prod_{i=1}^kr_i$ satisfies $(r,W)=1$ and $\mu(r)^2=1$, in which case $y^{(m)}_{r_1,\dotsc,r_k}$ is given in terms of $y_{r_1,\dotsc,r_k}$ by Lemma \ref{lmm:YjExpression}. We first concentrate on this case when $y^{(m)}_{r_1,\dotsc,r_k}\ne 0$. We substitute our choice \eqref{eq:YChoice} of $y$ into our expression from Lemma \ref{lmm:YjExpression}. This gives
\begin{align}
y^{(m)}_{r_1,\dotsc,r_k}&=\sum_{(u,W\prod_{i=1}^k r_i)=1}\frac{\mu(u)^2}{\phi(u)}F\Bigl(\frac{\log{r_1}}{\log{R}},\dotsc,\frac{\log{r_{m-1}}}{\log{R}},\frac{\log{u}}{\log{R}},\frac{\log{r_{m+1}}}{\log{R}},\dotsc,\frac{\log{r_{k}}}{\log{R}}\Bigr)\nonumber\\
&\qquad+O\Bigl(\frac{F_{max} \phi(W) \log{R}}{W D_0}\Bigr).\label{eq:YmFExpression}\end{align}
We can see from this that $y^{(m)}_{max}\ll \phi(W)F_{max}(\log{R})/W$. We now estimate the sum over $u$ in \eqref{eq:YmFExpression}. We apply Lemma \ref{lmm:PartialSummation} with
\begin{align}
\gamma(p)&=\begin{cases}1,\qquad &p\nmid W\prod_{i=1}^k r_i,\\
0,&\text{otherwise,}\end{cases}\\
L&\ll 1+\sum_{p|W\prod_{i=1}^k r_i}\frac{\log{p}}{p}\ll \sum_{p<\log{R}}\frac{\log{p}}{p}+\sum_{\substack{p|W\prod_{i=1}^k r_i \\ p>\log{R}}}\frac{\log{\log{R}}}{\log{R}}\ll \log{\log{N}},
\end{align}
and with $A_1,A_2$ suitable fixed constants. This gives us
\begin{align}
y^{(m)}_{r_1,\dotsc,r_k}&=(\log{R})\frac{\phi(W)}{W}\bigl(\prod_{i=1}^k \frac{\phi(r_i)}{r_i}\bigr)F^{(m)}_{r_1,\dotsc,r_{k}}+O\Bigl(\frac{F_{max} \phi(W)\log{R}}{WD_0}\Bigr),\label{eq:YjFExpression}\end{align}
where
\begin{equation}
F^{(m)}_{r_1,\dotsc,r_k}=\int_0^1F\Bigl(\frac{\log{r_1}}{\log{R}},\dotsc,\frac{\log{r_{m-1}}}{\log{R}},t_m,\frac{\log{r_{m+1}}}{\log{R}},\dotsc,\frac{\log{r_k}}{\log{R}}\Bigr)dt_m.
\end{equation}
Thus we have shown that if $r_m=1$ and $r=\prod_{i=1}^kr_i$ satisfies $(r,W)=1$ and $\mu(r)^2=1$ then $y^{(m)}_{r_1,\dotsc,r_k}$ is given by \eqref{eq:YjFExpression}, and otherwise $y^{(m)}_{r_1,\dotsc,r_k}=0$. We now substitute this into our expression from Lemma \ref{lmm:S2Expression1}, namely
\begin{equation}
S_2^{(m)}=\frac{N}{\phi(W)\log{N}}\sum_{r_1,\dotsc,r_k}\frac{(y^{(m)}_{r_1,\dotsc,r_k})^2}{\prod_{i=1}^kg(r_i)}+O\Bigl(\frac{(y^{(m)}_{max})^2 \phi(W)^{k-2} N (\log{N})^{k-2}}{W^{k-1} D_0}\Bigr)+O\Bigl(\frac{y_{max}^2 N}{(\log{N})^A}\Bigr).\end{equation}
 We obtain
\begin{align}
S_2^{(m)}&=\frac{\phi(W) N (\log{R})^2}{W^2\log{N}}\sum_{\substack{r_1,\dotsc,r_k\\(r_i,W)=1\forall i\\(r_{i},r_{j})=1 \forall i\ne j\\ r_m=1}}\Bigl(\prod_{i=1}^k\frac{\mu(r_i)^2\phi(r_i)^2}{g(r_i)r_i^2}\Bigr)(F^{(m)}_{r_1,\dotsc,r_k})^2+O\Bigl(\frac{F_{max}^2 \phi(W)^{k} N (\log{R})^k}{W^{k+1} D_0}\Bigl).
\end{align}
We remove the condition that $(r_{i},r_{j})=1$ in the same way we did when considering $S_1$. Instead of \eqref{eq:S1CoprimeError}, this introduces an error which is of size
\begin{align}
&\ll \frac{\phi(W) N (\log{R})^2F_{max}^2}{W^2\log{N}}\Bigl(\sum_{p>D_0}\frac{\phi(p)^4}{g(p)^2p^4}\Bigr)\Bigl(\sum_{\substack{r<R\\ (r,W)=1}}\frac{\mu(r)^2\phi(r)^2}{g(r)r^2}\Bigr)^{k-1}\ll \frac{F_{max}^2 \phi(W)^k N (\log{N})^k}{W^{k+1} D_0}.
\end{align}
Thus we are left to evaluate the sum
\begin{equation}
\sum_{\substack{r_1,\dotsc,r_{m-1},r_{m+1},\dotsc,r_k\\(r_i,W)=1\forall i }}\Bigl(\prod_{\substack{1\le i \le k\\i\ne j}}\frac{\mu(r_i)^2\phi(r_i)^2}{g(r_i)r_i^2}\Bigr)(F^{(m)}_{r_1,\dotsc,r_k})^2.
\end{equation}
We estimate this by applying Lemma \ref{lmm:PartialSummation} to each summation variable in turn. In each case we take
\begin{align}
\gamma(p)&=\begin{cases}1-\frac{p^2-3p+1}{p^3-p^2-2p+1},\qquad&p\nmid W\\ 0,&\text{ otherwise,}\end{cases}\\
L&\ll 1+\sum_{p|W}\frac{\log{p}}{p}\ll \log{D_0},
\end{align}
and $A_1,A_2$ suitable fixed constants. This gives
\begin{equation}
S_2^{(m)}=\frac{\phi(W)^{k} N (\log{R})^{k+1}}{W^{k+1}\log{N}}J^{(m)}_k+O\Bigl(\frac{F_{max}^2 \phi(W)^k N (\log{N})^k}{W^{k+1}D_0}\Bigr),
\end{equation}
where
\begin{equation}
J^{(m)}_k=\int_0^1\dotsi\int_0^1\Bigl(\int_0^1 F(t_1,\dotsc,t_k)dt_m\Bigr)^2dt_1\dotsc dt_{m-1}dt_{m+1}\dotsc dt_k,
\end{equation}
as required.
\end{proof}
\begin{rmk}If $F(t_1,\dotsc,t_k)=G(\sum_{i=1}^kt_i)$ for some function $G$, then $I_k(F)$ and $J_k^{(m)}(F)$ simplify to $I_k(F)=\int_0^1G(t)^2t^{k-1}dt/(k-1)!$ and $J_k^{(m)}(F)=\int_0^1 (\int_t^1G(v)dv)^2 t^{k-2}dt/(k-2)!$ for each $m$, which is equivalent to the results obtained using the original GPY method using weights given by \eqref{eq:GPYWeights}.
\end{rmk}
\begin{rmk}
Tao gives an alternative approach to arrive at his equivalent of Proposition \ref{prpstn:MainProp}. His approach is to define $\lambda_{d_1,\dots,d_k}$ in terms of a suitable smooth function $f(t_1,\dotsc,t_k)$ as in \eqref{eq:ApproximateLambda}. He then estimates the corresponding sums directly using Fourier integrals. This is somewhat similar to the original paper of Goldston, Pintz and Y\i ld\i r\i m \cite{GPY}. Our function $F$ corresponds to $f(t_1,\dotsc,t_k)$ differentiated with respect to each coordinate.
\end{rmk}
\section{\texorpdfstring{Choice of smooth weight for large $k$}{Choice of smooth weight for large k}}
In this section we establish part $(3)$ of Proposition \ref{prpstn:FChoices}. Our argument here is closely related to that of Tao, who uses a probability theory proof.

We let $\mathcal{S}_k$ denote the set of Riemann-integrable functions $F:[0,1]^k\rightarrow\mathbb{R}$ supported on $\mathcal{R}_k=\{(x_1,\dotsc,x_k)\in[0,1]^k:\sum_{i=1}^kx_i\le 1\}$ with $I_k(F)\ne 0$ and $J_k^{(m)}(F)\ne 0$ for each $m$. We would like to obtain a lower bound for
\begin{equation}
M_k=\sup_{F\in\mathcal{S}_k}\frac{\sum_{m=1}^k J_k^{(m)}(F)}{I_k(F)}.
\end{equation}
\begin{rmk}
Let $\mathcal{L}_k$ denote the linear operator defined by
\begin{equation}
\mathcal{L}_kF(u_1,\dotsc,u_k)=\sum_{m=1}^k\int_0^{1-\sum_{i\ne m}u_i}F(u_1,\dotsc,u_{m-1},t_m,u_{m+1},\dotsc,u_k)dt_m
\end{equation}
whenever $(u_1,\dotsc,u_k)\in\mathcal{R}_k$, and zero otherwise. We expect that if $F$ maximizes the ratio $\sum_{m=1}^kJ_k^{(m)}(F)/I_k(F)$, then $F$ is an eigenfunction for $\mathcal{L}_k$, and the corresponding eigenvalue is the value of ratio at $F$. Unfortunately the author has not been able to solve the eigenvalue equation for $\mathcal{L}_k$ when $k>2$.
\end{rmk}
We obtain a lower bound for $M_k$ by constructing a function $F=F_k$ which makes the ratio $\sum_{m=1}^kJ_k^{(m)}(F)/I_k(F)$ large provided $k$ is large. We choose $F$ to be of the form
\begin{equation}\label{eq:FChoiceKLarge}
F(t_1,\dots,t_k)=\begin{cases}
\prod_{i=1}^k g(kt_i),\qquad &\text{if $\sum_{i=1}^kt_i\le 1$,}\\
0,&\text{otherwise,}\end{cases}
\end{equation}
for some smooth function $g:[0,\infty]\rightarrow\mathbb{R}$, supported on $[0,T]$. We see that with this choice $F$ is symmetric, and so $J^{(m)}_k(F)$ is independent of $m$. Thus we only need to consider $J_k=J_k^{(1)}(F)$. Similarly we write $I_k=I_k(F)$.

The key observation is that if the center of mass $\int_0^\infty u g(u)^2du/\int_0^\infty g(u)^2du$ of $g^2$ is strictly less than 1, then for large $k$ we expect that the constraints $\sum_{i=1}^kt_i\le 1$ to be able to be dropped at the cost of only a small error. This is because (by concentration of measure) the main contribution to the unrestricted integrals $I'_k=\int_0^\infty\dotsi\int_0^\infty \prod_{i=1}^kg(kt_i)^2 dt_1\dotsc dt_k$ and $J'_k=\int_0^\infty\dotsi\int_0^\infty(\int_0^\infty \prod_{i=1}^kg(kt_i) dt_1)^2dt_2\dotsc dt_k$ should come primarily from when $\sum_{i=1}^kt_i$ is close to the center of mass. Therefore  we would expect the contribution when $\sum_{i=1}^k t_i>1$ to be small if the center of mass is less than 1, and so $I_k$ and $J_k$ are well approximated by $I_k'$ and $J_k'$ in this case.

To ease notation we let $\gamma=\int_{u\ge 0}g(u)^2du$, and restrict our attention to $g$ such that $\gamma>0$. We have
\begin{equation}
I_k=\idotsint\limits_{\mathcal{R}_k}F(t_1,\dotsc,t_k)^2 dt_1\dotsc dt_k\le \Bigl(\int_0^\infty g(kt)^2dt\Bigr)^k=k^{-k}\gamma^k.\label{eq:IkBound}
\end{equation}
We now consider $J_k$. Since squares are non-negative, we obtain a lower bound for $J_k$ if we restrict the outer integral to $\sum_{i=2}^kt_i<1-T/k$. This has the advantage that, by the support of $g$, there are no further restrictions on the inner integral. Thus
\begin{equation}
J_k\ge \idotsint\limits_{\substack{t_2,\dotsc,t_k\ge 0\\ \sum_{i=2}^k t_i\le 1-T/k}}\Bigl(\int_0^{T/k}\Bigl(\prod_{i=1}^kg(kt_i)\Bigr)dt_1\Bigr)^2dt_2\dotsc dt_k.\label{eq:JkBound}
\end{equation}
We write the right hand side of \eqref{eq:JkBound} as $J'_k-E_k$, where
\begin{align}
J'_k&=\idotsint\limits_{t_2,\dotsc,t_k\ge 0}\Bigl(\int_0^{T/k} \Bigl(\prod_{i=1}^kg(kt_i)\Bigr)dt_1\Bigr)^2dt_2\dotsc dt_k\nonumber\\
&=\Bigl(\int_0^\infty g(kt_1)dt_1\Bigr)^2\Bigl(\int_0^\infty g(kt)^2dt\Bigr)^{k-1}=k^{-k-1}\gamma^{k-1}\Bigl(\int_{0}^\infty g(u)du\Bigr)^2,\label{eq:Jk'}\\
E_k&=\idotsint\limits_{\substack{t_2,\dotsc,t_k\ge 0\\ \sum_{i=2}^k t_i>1-T/k}}\Bigl(\int_{0}^{T/k} \Bigl(\prod_{i=1}^k g(kt_i)\Bigr) dt_1\Bigr)^2dt_2\dotsc dt_k\nonumber\\
&= k^{-k-1}\Bigl(\int_0^\infty g(u)du\Bigr)^2 \idotsint\limits_{\substack{u_2,\dotsc,u_k\ge 0\\ \sum_{i=2}^ku_i>k-T}}\Bigl(\prod_{i=2}^k g(u_i)^2\Bigr)du_2\dotsc du_k.\label{eq:EkBound}
\end{align}
First we wish to show the error integral $E_k$ is small. We do this by comparison with a second moment. We expect the bound \eqref{eq:EkBound} for $E_k$ to be small if the center of mass of $g^2$ is strictly less than $(k-T)/(k-1)$. Therefore we introduce the restriction on $g$ that
\begin{equation}
\mu=\frac{\int_0^\infty ug(u)^2du}{\int_0^\infty g(u)^2du}<1-\frac{T}{k}.\label{eq:MuBound}
\end{equation}
To simplify notation, we put $\eta=(k-T)/(k-1)-\mu>0$. If $\sum_{i=2}^ku_i>k-T$ then $\sum_{i=2}^ku_i>(k-1)(\mu+\eta)$, and so we have
\begin{equation}
1\le \eta^{-2}\Bigl(\frac{1}{k-1}\sum_{i=2}^ku_i-\mu\Bigr)^2.\label{eq:MomentBound}
\end{equation}
Since the right hand side of \eqref{eq:MomentBound} is non-negative for all $u_i$, we obtain an upper bound for $E_k$ if we multiply the integrand by $\eta^{-2}(\sum_{i=2}^ku_i/(k-1)-\mu)^2$, and then drop the requirement that $\sum_{i=1}^ku_i>k-T$. This gives us
\begin{align}
E_k&\le \eta^{-2}k^{-k-1}\Bigl(\int_0^\infty g(u)du\Bigr)^2\int_0^\infty\dotsi\int_0^\infty \Bigl(\frac{\sum_{i=2}^ku_i}{k-1}-\mu\Bigr)^2\Bigl(\prod_{i=2}^kg(u_i)^2\Bigr) du_2\dotsc du_k.
\end{align}
We expand out the inner square. All the terms which are not of the form $u_j^2$ we can calculate explicitly as an expression in $\mu$ and $\gamma$. We find
\begin{align}
\int_0^\infty\dotsi\int_0^\infty\Bigl(\frac{2\sum_{2\le i<j\le k}u_iu_j}{(k-1)^2}-\frac{2\mu\sum_{i=2}^ku_i}{k-1}+\mu^2\Bigr)\Bigl(\prod_{i=2}^kg(u_i)^2\Bigr) du_2\dotsc du_k&=\frac{-\mu^2\gamma^{k-1}}{k-1}.
\end{align}
For the $u_j^2$ terms we see that $u_j^2g(u_j)^2\le Tu_jg(u_j)^2$ from the support of $g$. Thus
\begin{align}
\int_0^\infty \dotsi\int_0^\infty u_j^2 \Bigl(\prod_{i=2}^k g(u_i)^2\Bigr)du_2\dotsc du_k& \le T \gamma^{k-2}\int_0^\infty u_j g(u_j)^2du_j= \mu T \gamma^{k-1}.
\end{align}
This gives
\begin{align}
E_k&\le \eta^{-2}k^{-k-1}\Bigl(\int_0^\infty g(u)du\Bigr)^2\Bigl(\frac{\mu T \gamma^{k-1}}{k-1}-\frac{\mu^2 \gamma^{k-1}}{k-1}\Bigr)\le \frac{\eta^{-2} \mu T k^{-k-1} \gamma^{k-1} }{k-1}\Bigl(\int_0^\infty g(u)du\Bigr)^2.\label{eq:EkBound}
\end{align}
Since $(k-1)\eta^2\ge k(1-T/k-\mu)^2$ and $\mu\le 1$, we find that putting together \eqref{eq:IkBound}, \eqref{eq:JkBound}, \eqref{eq:Jk'} and \eqref{eq:EkBound}, we obtain
\begin{equation}
\frac{kJ_k}{I_k}\ge \frac{(\int_0^\infty g(u)du)^2}{\int_0^\infty g(u)^2du}\Bigl(1-\frac{T}{k(1-T/k-\mu)^2}\Bigr).\label{eq:RatioExpression}
\end{equation}
To maximize our lower bound \eqref{eq:RatioExpression}, we wish to maximize $\int_0^T g(u)du$ subject to the constraints that $\int_0^T g(u)^2du=\gamma$ and $\int_0^T ug(u)^2du=\mu\gamma$. Thus we wish to maximize the expression
\begin{equation}
\int_0^T g(u)du-\alpha\Bigl(\int_0^T g(u)^2du-\gamma\Bigr)-\beta\Bigl(\int_0^T ug(u)^2du-\mu\gamma\Bigr)
\end{equation}
with respect to $\alpha,\beta$ and the function $g$. By the Euler-Lagrange equation, this occurs when $\frac{\partial}{\partial g}(g(t) - \alpha g(t)^2 -\beta t g(t)^2)=0$ for all $t\in[0,T]$. Thus we see that
\begin{equation}
g(t)=\frac{1}{2\alpha+2\beta t}\qquad \text{for $0\le t\le T$}.
\end{equation}
Since the ratio we wish to maximize is unaffected if we multiply $g$ by a positive constant, we restrict our attention to functions $g$ is of the form $1/(1+At)$ for $t\in[0,T]$ and for some constant $A>0$. With this choice of $g$ we find that
\begin{align}
\int_0^Tg(u)du&=\frac{\log(1+AT)}{A},\qquad \int_0^Tg(u)^2du=\frac{1}{A}\Bigl(1-\frac{1}{1+AT}\Bigr),\label{eq:Integrations}\\
\int_0^Tug(u)^2du&=\frac{1}{A^2}\Bigl(\log(1+AT)-1+\frac{1}{1+AT}\Bigr).
\end{align}
We choose $T$ such that $1+AT=e^A$ (which is close to optimal). With this choice we find that $\mu=1/(1-e^{-A})-A^{-1}$ and $T\le e^A/A$. Thus $1-T/k-\mu\ge A^{-1}(1-A/(e^A-1)-e^A/k)$. Substituting \eqref{eq:Integrations} into \eqref{eq:RatioExpression}, and then using these expressions, we find that
\begin{equation}
\frac{kJ_k}{I_k}\ge \frac{A}{1-e^{-A}}\Bigl(1-\frac{T}{k(1-T/k-\mu)^2}\Bigr)\ge A\Bigl(1-\frac{A e^A}{k(1-A/(e^A-1)-e^A/k)^2}\Bigr),
\end{equation}
provided the right hand side is positive. Finally, we choose $A=\log{k}-2\log\log{k}>0$. For $k$ sufficiently large we have
\begin{equation}
1-\frac{T}{k}-\mu\ge A^{-1}\Bigl(1-\frac{(\log{k})^3}{k}-\frac{1}{(\log{k})^2}\Bigr)>0,
\end{equation}
and so $\mu<1-T/k$, as required by our constraint \eqref{eq:MuBound}. This choice of $A$ gives
\begin{equation}
M_k\ge \frac{kJ_k}{I_k}\ge (\log{k}-2\log\log{k})\Bigl(1-\frac{\log{k}}{(\log{k})^2+O(1)}\Bigr)\ge \log{k}-2\log\log{k}-2
\end{equation}
when $k$ is sufficiently large.

\section{\texorpdfstring{Choice of weight for small $k$}{Choice of weight for small k}}
In this section we establish parts $(1)$ and $(2)$ of Proposition \ref{prpstn:FChoices}. In order to get a suitable lower bound for $M_k$ when $k$ is small, we will consider approximations to the optimal function $F$ of the form
\begin{equation}
F(t_1,\dotsc,t_k)=\begin{cases}P(t_1,\dotsc,t_k),\qquad &\text{if }(t_1,\dotsc,t_k)\in\mathcal{R}_k\\0,&\text{otherwise,}\end{cases}\label{eq:FChoice}
\end{equation}for polynomials $P$. By the symmetry of $\sum_{m=1}^kJ_k^{(m)}(F)$ and $I_k(F)$, we restrict our attention to polynomials which are symmetric functions of $t_1,\dotsc,t_k$. (If $F$ satisfies $\mathcal{L}_kF=\lambda F$ then $F_\sigma=F(\sigma(t_1),\dots,\sigma(t_k))$ also satisfies this for every permutation $\sigma$ of $t_1,\dots,t_k$. Thus the symmetric function which is the average of $F_\sigma$ over all such permutations would satisfy this eigenfunction equation, and so we expect there to be an optimal function which is symmetric.) Any such polynomial can be written as a polynomial expression in the power sum polynomials $P_j=\sum_{i=1}^kt_i^j$.
\begin{lmm}\label{lmm:Integration}
Let $P_j=\sum_{i=1}^kt_i^j$ denote the $j^{th}$ symmetric power sum polynomial. Then we have
\[\idotsint\limits_{\mathcal{R}_k}(1-P_1)^aP_j^bdt_1\dotsc dt_k=\frac{a!}{(k+jb+a)!}G_{b,j}(k),\]
where
\[G_{b,j}(x)=b!\sum_{r=1}^b\binom{x}{r}\sum_{\substack{b_1,\dotsc,b_r\ge 1\\ \sum_{i=1}^rb_i=b}}\prod_{i=1}^r\frac{(jb_i)!}{b_i!}\]
is a polynomial of degree $b$ which depends only on $b$ and $j$.
\end{lmm}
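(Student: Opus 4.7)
The plan is to expand $P_j^b$ by the multinomial theorem, integrate monomials against $(1-P_1)^a$ using the standard Dirichlet integral, and then reorganize the resulting sum by grouping exponent vectors according to their support.

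First I would write, via the multinomial theorem,
\[
P_j^b = \Bigl(\sum_{i=1}^k t_i^j\Bigr)^b = \sum_{\substack{c_1,\dotsc,c_k\ge 0\\ c_1+\dotsb+c_k=b}}\frac{b!}{c_1!\dotsb c_k!}\prod_{i=1}^k t_i^{jc_i}.
\]
Then I invoke the Dirichlet integral over the simplex: for non-negative integers $\alpha_1,\dotsc,\alpha_k$,
\[
\idotsint\limits_{\mathcal{R}_k}(1-\textstyle\sum_i t_i)^a\prod_{i=1}^k t_i^{\alpha_i}\,dt_1\dotsb dt_k=\frac{a!\prod_{i=1}^k\alpha_i!}{(a+k+\sum_i\alpha_i)!},
\]
which follows by repeated application of the beta integral (or is standard). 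Applying this with $\alpha_i=jc_i$ yields
\[
\idotsint\limits_{\mathcal{R}_k}(1-P_1)^a P_j^b\,dt_1\dotsb dt_k=\frac{a!\,b!}{(a+k+jb)!}\sum_{\substack{c_1,\dotsc,c_k\ge 0\\ c_1+\dotsb+c_k=b}}\prod_{i=1}^k\frac{(jc_i)!}{c_i!}.
\]

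The remaining step is purely combinatorial: I would group the sum on the right according to the number $r$ of indices $i$ for which $c_i>0$. Each zero $c_i$ contributes a factor of $1$, so choosing which $r$ of the $k$ positions carry the nonzero values (in $\binom{k}{r}$ ways) and naming those values $b_1,\dotsc,b_r\ge 1$ with $b_1+\dotsb+b_r=b$ gives
\[
\sum_{\substack{c_1,\dotsc,c_k\ge 0\\ c_1+\dotsb+c_k=b}}\prod_{i=1}^k\frac{(jc_i)!}{c_i!}=\sum_{r=1}^{\min(k,b)}\binom{k}{r}\sum_{\substack{b_1,\dotsc,b_r\ge 1\\ b_1+\dotsb+b_r=b}}\prod_{i=1}^r\frac{(jb_i)!}{b_i!}.
\]
Since $\binom{k}{r}=0$ for $r>k$, I may freely extend the outer sum to $r=1,\dotsc,b$, giving exactly $G_{b,j}(k)/b!$. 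Multiplying through produces the stated identity.

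Finally I would observe that each term $\binom{x}{r}$ is a polynomial of degree $r$ in $x$, the inner composition sum depends only on $r,b,j$, and the outer summation ranges up to $r=b$; hence $G_{b,j}(x)$ is indeed a polynomial in $x$ of degree $b$ depending only on $b$ and $j$. There is no real obstacle here: the only mild care is justifying the Dirichlet integral for integer exponents (an immediate induction on $k$ using $\int_0^{1-s}(1-s-t)^a t^\alpha\,dt=a!\,\alpha!/(a+\alpha+1)!\cdot(1-s)^{a+\alpha+1}$) and verifying the bookkeeping in the change from weak compositions of $b$ into $k$ parts to strict compositions of $b$ into $r$ parts together with the choice of support.
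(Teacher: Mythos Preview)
Your proposal is correct and follows essentially the same route as the paper: expand $P_j^b$ by the multinomial theorem, apply the Dirichlet/beta integral over the simplex (which the paper also proves by the same one-variable induction you sketch), and then regroup the resulting sum over weak compositions of $b$ according to the size $r$ of the support. The paper's argument is step-for-step the same, including the observation that the $\binom{k}{r}$ factor makes $G_{b,j}$ a polynomial of degree $b$ in $k$.
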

\begin{proof}
We first show by induction on $k$ that
\begin{equation}
\idotsint\limits_{\mathcal{R}_k}\Bigl(1-\sum_{i=1}^kt_i\Bigr)^a\prod_{i=1}^kt_i^{a_i}dt_1\dotsc dt_k=\frac{a!\prod_{i=1}^ka_i!}{(k+a+\sum_{i=1}^ka_i)!}.\label{eq:SimplifiedIntegration}
\end{equation}
We consider the integration with respect to $t_1$. The limits of integration are $0$ and $1-\sum_{i=2}^kt_i$ for $(t_2,\dotsc,t_k)\in\mathcal{R}_{k-1}$. By substituting $v=t_1/(1-\sum_{i=2}^kt_i)$ we find
\begin{align}
\int_0^{1-\sum_{i=2}^kt_i}\Bigl(1-\sum_{i=1}^kt_i\Bigr)^a\Bigl(\prod_{i=1}^kt_i^{a_i}\Bigr)dt_1 &=\Bigl(\prod_{i=2}^kt_i^{a_i}\Bigr)\Bigl(1-\sum_{i=2}^kt_i\Bigr)^{a+a_1+1}\int_0^1(1-v)^av^{a_1}dv\nonumber\\
&=\frac{a!a_1!}{(a+a_1+1)!}\Bigl(\prod_{i=2}^kt_i^{a_i}\Bigr)\Bigl(1-\sum_{i=2}^kt_i\Bigr)^{a+a_1+1}.
\end{align}
Here we used the beta function identity $\int_0^1t^a(1-t)^bdt=a!b!/(a+b+1)!$ in the last line. We now see \eqref{eq:SimplifiedIntegration} follows by induction.

By the binomial theorem,
\begin{equation}
P_j^b=\sum_{\substack{b_1,\dots,b_k\\ \sum_{i=1}^kb_i=b}}\frac{b!}{\prod_{i=1}^k b_i!}\prod_{i=1}^k t_i^{jb_i}.
\end{equation}
Thus, applying \eqref{eq:SimplifiedIntegration}, we obtain
\begin{equation}
\idotsint\limits_{\mathcal{R}_k}(1-P_1)^aP_j^bdt_1\dotsc dt_k=\frac{b!a!}{(k+a+jb)!}\sum_{\substack{b_1,\dotsc,b_k\\ \sum_{i=1}^k b_i=b}}\prod_{i=1}^k\frac{(jb_i)!}{b_i!}.
\end{equation}
For computations $b$ will be small, and so we find it convenient to split the summation depending on how many of the $b_i$ are non-zero. Given an integer $r$, there are $\binom{k}{r}$ ways of choosing $r$ of $b_1,\dotsc,b_k$ to be non-zero. Thus
\begin{equation}
\sum_{\substack{b_1,\dotsc,b_k\\ \sum_{i=1}^k b_i=b}}\prod_{i=1}^k\frac{(jb_i)!}{b_i!}=\sum_{r=1}^b\binom{k}{r}\sum_{\substack{b_1,\dots,b_r\ge 1\\ \sum_{i=1}^rb_i=b}}\prod_{i=1}^r\frac{(jb_i)!}{b_i!}.
\end{equation}
This gives the result.
\end{proof}
It is straightforward to extend Lemma \ref{lmm:Integration} to more general combinations of the symmetric power polynomials. In this paper we will concentrate on the case when $P$ is a polynomial expression in only $P_1$ and $P_2$ for simplicity. We comment the polynomials $G_{b,j}$ are not problematic to calculate numerically for small values of $b$. We now use Lemma \ref{lmm:Integration} to obtain a manageable expression for $I_k(F)$ and $J_k^{(m)}(F)$ with this choice of $P$.
\begin{lmm}\label{lmm:QuadraticForms}
Let $F$ be given in terms of a polynomial $P$ by \eqref{eq:FChoice}. Let $P$ be given in terms of a polynomial expression in the symmetric power polynomials $P_1=\sum_{i=1}^k t_i$ and $P_2=\sum_{i=1}^kt_i^2$ by $P=\sum_{i=1}^da_i(1-P_1)^{b_i}P_2^{c_i}$ for constants $a_i\in \mathbb{R}$ and non-negative integers $b_i,c_i$. Then for each $1\le m\le k$ we have
\begin{align*}
I_k(F)&=\sum_{1\le i,j\le d}a_ia_j\frac{(b_i+b_j)!G_{c_i+c_j,2}(k)}{(k+b_i+b_j+2c_i+2c_j)!},\\
J^{(m)}_k(F)&=\sum_{1\le i,j\le d}a_ia_j\sum_{c_1'=0}^{c_i}\sum_{c_2'=0}^{c_j}\binom{c_i}{c_1'}\binom{c_j}{c_2'}\frac{\gamma_{b_i,b_j,c_i,c_j,c_1',c_2'}G_{c_1'+c_2',2}(k-1)}{(k+b_i+b_j+2c_i+2c_j+1)!},
\end{align*}
where
\[
\gamma_{b_i,b_j,c_i,c_j,c_1',c_2'}=\frac{b_i!b_j!(2c_i-2c'_1)!(2c_j-2c'_2)!(b_i+b_j+2c_i+2c_j-2c_1'-2c_2'+2)!}{(b_i+2c_i-2c'_1+1)!(b_j+2c_j-2c'_2+1)!
},
\]
and where $G$ is the polynomial given by Lemma \ref{lmm:Integration}.
\end{lmm}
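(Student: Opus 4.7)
The plan is to deduce both formulas directly by reducing to Lemma~\ref{lmm:Integration}. Since $F=P$ on $\mathcal{R}_k$ and vanishes elsewhere, I would first expand
\[F^2 = \sum_{1\le i,j\le d} a_i a_j\, (1-P_1)^{b_i+b_j} P_2^{c_i+c_j},\]
and then apply Lemma~\ref{lmm:Integration} term by term with $a=b_i+b_j$, $b=c_i+c_j$, $j=2$. This immediately gives the stated expression for $I_k(F)$, with the factor $G_{c_i+c_j,2}(k)/(k+b_i+b_j+2c_i+2c_j)!$ and multiplier $(b_i+b_j)!$.

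For $J_k^{(m)}(F)$, the first step is to evaluate the inner integral $\int_0^1 F\,dt_m$. By symmetry it suffices to do this for a single term $(1-P_1)^{b_i}P_2^{c_i}$. Write $s=\sum_{\ell\ne m}t_\ell$ and $Q_2=\sum_{\ell\ne m}t_\ell^2$, so on the support $1-P_1=(1-s)-t_m$ and $P_2=Q_2+t_m^2$. Expanding $(Q_2+t_m^2)^{c_i}$ by the binomial theorem and substituting $u=t_m/(1-s)$ reduces the $t_m$-integral to a beta integral $\int_0^1(1-u)^{b_i}u^{2(c_i-c_1')}du=b_i!(2c_i-2c_1')!/(b_i+2c_i-2c_1'+1)!$. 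The result is
\[\int_0^1 F\,dt_m = \sum_i a_i\sum_{c_1'=0}^{c_i}\binom{c_i}{c_1'}\frac{b_i!(2c_i-2c_1')!}{(b_i+2c_i-2c_1'+1)!}\,Q_2^{c_1'}(1-s)^{b_i+2c_i-2c_1'+1},\]
which is again a polynomial expression in $(1-Q_1)$ and $Q_2$ evaluated on $\mathcal{R}_{k-1}$ (where $Q_1=s$).

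The second step is to square this expression, multiplying pairs indexed by $(i,c_1')$ and $(j,c_2')$, and then integrate over $(t_1,\dotsc,t_{m-1},t_{m+1},\dotsc,t_k)\in\mathcal{R}_{k-1}$. The typical term to integrate is $(1-Q_1)^{A}Q_2^{c_1'+c_2'}$ with exponent $A=b_i+b_j+2c_i+2c_j-2c_1'-2c_2'+2$, and Lemma~\ref{lmm:Integration} applied in $k-1$ variables gives the value $A!\,G_{c_1'+c_2',2}(k-1)/(k-1+A+2(c_1'+c_2'))!=A!\,G_{c_1'+c_2',2}(k-1)/(k+b_i+b_j+2c_i+2c_j+1)!$. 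Collecting all constants produces exactly the factor $\gamma_{b_i,b_j,c_i,c_j,c_1',c_2'}$ in the numerator, which completes the derivation.

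The only real bookkeeping obstacle is keeping straight the two independent binomial expansions of $P_2^{c_i}$ and $P_2^{c_j}$ after squaring, and making sure the remaining factor of $(1-Q_1)^A$ in the $\mathcal{R}_{k-1}$ integral has the correct exponent so that the denominator matches $(k+b_i+b_j+2c_i+2c_j+1)!$. No further analytic input is needed: both formulas follow from Lemma~\ref{lmm:Integration} together with one elementary beta integral.
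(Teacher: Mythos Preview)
Your proposal is correct and follows essentially the same route as the paper: expand $P^2$ bilinearly and apply Lemma~\ref{lmm:Integration} for $I_k(F)$; for $J_k^{(m)}(F)$, binomially split $P_2^{c_i}=(Q_2+t_m^2)^{c_i}$, reduce the $t_m$-integral to a beta integral via $u=t_m/(1-s)$, square, and apply Lemma~\ref{lmm:Integration} in $k-1$ variables. The only cosmetic difference is that the paper specializes to $m=1$ by symmetry and writes $P_1',P_2'$ for your $Q_1,Q_2$.
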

\begin{proof}
We first consider $I_k(F)$. We have, using Lemma \ref{lmm:Integration},
\begin{align}
I_k(F)&=\idotsint\limits_{\mathcal{R}_k}P^2dt_1\dotsc dt_k=\sum_{1\le i,j\le d}a_ia_j\idotsint\limits_{\mathcal{R}_k}(1-P_1)^{b_i+b_j}P_2^{c_i+c_j}dt_1\dotsc dt_k\nonumber\\
&=\sum_{1\le i,j\le d}a_ia_j\frac{(b_i+b_j)!G_{c_i+c_j,2}(k)}{(k+b_i+b_j+2c_i+2c_j)!}.
\end{align}
We now consider $J^{(m)}_k(F)$. Since $F$ is symmetric in $t_1,\dotsc,t_k$ we see that $J_k^{(m)}(F)$ is independent of $m$, and so it suffices to only consider $J^{(1)}_k(F)$. We have
\begin{align}
\int_0^{1-\sum_{i=2}^kt_i}(1-P_1)^bP_2^c dt_1&=\sum_{c'=0}^c \binom{c}{c'}\Bigl(\sum_{i=2}^kt_i^2\Bigr)^{c'}\int_0^{1-\sum_{i=2}^kt_i}\Bigl(1-\sum_{i=1}^kt_i\Bigr)^bt_1^{2c-2c'}dt_1\nonumber\\
&=\sum_{c'=0}^c \binom{c}{c'}(P'_2)^{c'}(1-P_1')^{b+2c-2c'+1}\int_0^1(1-u)^bu^{2c-2c'}du\nonumber\\
&=\sum_{c'=0}^c \binom{c}{c'}(P'_2)^{c'}(1-P_1')^{b+2c-2c'+1}\frac{b!(2c-2c')!}{(b+2c-2c'+1)!},
\end{align}
where $P_1'=\sum_{i=2}^kt_i$ and $P_2'=\sum_{i=2}^kt_i^2$. Thus
\begin{align}
\Bigl(\int_0^1Fdt_1\Bigr)^2&=\Bigl(\sum_{i=1}^da_i\int_0^{1-\sum_{j=2}^kt_j}(1-P_1)^{b_i}P_2^{c_i}dt_1\Bigr)^2\nonumber\\
&=\sum_{1\le i,j\le d}a_ia_j\sum_{c'_1=0}^{c_i}\sum_{c'_2=0}^{c_j}\binom{c_i}{c_1'}\binom{c_j}{c_2'}(P_2')^{c_1'+c_2'}(1-P_1')^{b_i+b_j+2c_i+2c_j-2c_1'-2c_2'+2}\nonumber\\
&\qquad\times \frac{b_i!b_j!(2c_i-2c'_1)!(2c_j-2c'_2)!}{(b_i+2c_i-2c'_1+1)!(b_j+2c_j-2c'_2+1)!
}.\label{eq:PrimeDecomposition1}
\end{align}
Applying Lemma \ref{lmm:Integration} again, we see that
\begin{equation}
\idotsint\limits_{\mathcal{R}_{k-1}}(1-P_1')^{b}(P_2')^{c'}dt_2\dotsc dt_k=\frac{b!}{(k+b+c-1)!}G_{c,2}(k-1).\label{eq:PrimeIntegration}
\end{equation}
Combining \eqref{eq:PrimeDecomposition1} and \eqref{eq:PrimeIntegration} gives the result.
\end{proof}
We see from Lemma \ref{lmm:QuadraticForms} that $I_k(F)$ and $\sum_{m=1}^kJ_k^{(m)}(F)$ can both be expressed as quadratic forms in the coefficients $\mathbf{a}=(a_1,\dots,a_d)$ of $P$. Moreover, these will be positive definite real quadratic forms. Thus in particular we find that
\begin{equation}
\frac{\sum_{m=1}^k J_k^{(m)}(F)}{I_k(F)}=\frac{\mathbf{a}^T A_2 \mathbf{a}}{\mathbf{a}^T A_1 \mathbf{a}},
\end{equation}
for two rational symmetric positive definite matrices $A_1,A_2$, which can be calculated explicitly in terms of $k$ for any choice of the exponents $b_i,c_i$. Maximizing expressions of this form has a known solution.
\begin{lmm}
Let $A_1,A_2$ be real, symmetric positive definite matrices. Then
\[\frac{\mathbf{a}^T A_2 \mathbf{a}}{\mathbf{a}^T A_1 \mathbf{a}}\]
is maximized when $\mathbf{a}$ is an eigenvector of $A_1^{-1}A_2$ corresponding to the largest eigenvalue of $A_1^{-1}A_2$. The value of the ratio at its maximum is this largest eigenvalue.
\end{lmm}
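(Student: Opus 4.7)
The plan is to reduce this to the classical Rayleigh--Ritz theorem for a single symmetric matrix by symmetrizing away $M_1$. Since $M_1$ is symmetric and positive definite, it admits a (unique) symmetric positive-definite square root $M_1^{1/2}$, which is invertible. I would introduce the change of variables $\mathbf{b} = M_1^{1/2}\mathbf{a}$, so that $\mathbf{a} = M_1^{-1/2}\mathbf{b}$ runs over all nonzero vectors as $\mathbf{b}$ does. Under this substitution,
\[
\mathbf{a}^T M_1 \mathbf{a} = \mathbf{b}^T \mathbf{b}, \qquad \mathbf{a}^T M_2 \mathbf{a} = \mathbf{b}^T N \mathbf{b}, \qquad N := M_1^{-1/2} M_2 M_1^{-1/2},
\]
and $N$ is symmetric because $M_2$ is. The ratio to be maximized therefore equals the standard Rayleigh quotient $\mathbf{b}^T N \mathbf{b}/\mathbf{b}^T \mathbf{b}$.

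Next, by the Rayleigh--Ritz theorem applied to the symmetric matrix $N$, this Rayleigh quotient is maximized precisely when $\mathbf{b}$ is an eigenvector of $N$ for its largest eigenvalue $\lambda_{\max}(N)$, and the maximum value equals $\lambda_{\max}(N)$. The Rayleigh--Ritz statement itself follows from the existence of an orthonormal basis of eigenvectors of $N$ (the spectral theorem for real symmetric matrices): writing $\mathbf{b} = \sum_i c_i \mathbf{v}_i$ with $N\mathbf{v}_i = \mu_i \mathbf{v}_i$ and $\mu_1 \ge \mu_2 \ge \dotsb$, one has $\mathbf{b}^T N \mathbf{b}/\mathbf{b}^T\mathbf{b} = \sum_i \mu_i c_i^2/\sum_i c_i^2 \le \mu_1$, with equality when $\mathbf{b}$ lies in the $\mu_1$-eigenspace.

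Finally, I would translate the conclusion back to $M_1^{-1}M_2$. The matrices $N$ and $M_1^{-1}M_2$ are similar, via the identity
\[
M_1^{-1}M_2 = M_1^{-1/2}\bigl(M_1^{-1/2}M_2 M_1^{-1/2}\bigr) M_1^{1/2} = M_1^{-1/2}\,N\,M_1^{1/2},
\]
so they have the same spectrum, and $\mathbf{b}$ is an eigenvector of $N$ with eigenvalue $\lambda$ if and only if $\mathbf{a} = M_1^{-1/2}\mathbf{b}$ is an eigenvector of $M_1^{-1}M_2$ with the same eigenvalue $\lambda$. Thus $\lambda_{\max}(N) = \lambda_{\max}(M_1^{-1}M_2)$, and the maximizing $\mathbf{a}$ is a top eigenvector of $M_1^{-1}M_2$, as claimed.

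There is essentially no serious obstacle here; the result is a standard fact about generalized eigenvalue problems. The only point requiring a modicum of care is that $M_1^{-1}M_2$ is not itself symmetric in general, so one should not apply Rayleigh--Ritz to it directly; the symmetrization via $M_1^{1/2}$ and the similarity argument above handle this cleanly.
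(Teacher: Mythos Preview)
Your argument is correct and complete. The paper, however, takes a different route: it notes that the ratio is scale-invariant, normalizes to $\mathbf{a}^T M_1 \mathbf{a}=1$, and then applies Lagrange multipliers to $\mathbf{a}^T M_2 \mathbf{a}$ on this ellipsoid, arriving directly at the eigenvector condition $M_1^{-1}M_2\mathbf{a}=\lambda\mathbf{a}$ without ever forming $M_1^{1/2}$ or invoking the spectral theorem. Your approach via the symmetrization $N=M_1^{-1/2}M_2M_1^{-1/2}$ and classical Rayleigh--Ritz is arguably more complete, since it simultaneously establishes existence of the maximum and its value, and it explicitly handles the point that $M_1^{-1}M_2$ is not itself symmetric. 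The paper's argument is shorter and somewhat more heuristic (it locates stationary points rather than proving they are global maxima), but for this standard linear-algebra fact either route is perfectly adequate.
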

\begin{proof}
We see that multiplying $\mathbf{a}$ by a non-zero scalar doesn't change the ratio, so we may assume without loss of generality that $\mathbf{a}^TA_1\mathbf{a}=1$. By the theory of Lagrangian multipliers, $\mathbf{a}^TA_2\mathbf{a}$ is maximized subject to $\mathbf{a}^TA_1\mathbf{a}=1$ when
\begin{equation}
L(\mathbf{a},\lambda)=\mathbf{a}^T A_2\mathbf{a}-\lambda(\mathbf{a}^TA_1\mathbf{a}-1)
\end{equation}
is stationary. This occurs when (using the symmetricity of $A_1,A_2$)
\begin{equation}
0=\frac{\partial L}{\partial a_i}=((2A_2-2\lambda A_1)\mathbf{a})_i,
\end{equation}
for each $i$. This implies that (recalling that $A_1$ is positive definite so invertible)
\begin{equation}
A_1^{-1}A_2\mathbf{a}=\lambda\mathbf{a}.
\end{equation}
It then is clear that $\mathbf{a}^TA_1\mathbf{a}=\lambda^{-1}\mathbf{a}^T A_2\mathbf{a}$.
\end{proof}
\begin{proof}[Proof of parts $(1)$ and $(2)$ of Proposition \ref{prpstn:FChoices}]
To establish Proposition \ref{prpstn:FChoices} we rely on some computer calculation to calculate a lower bound for $M_k$. We let $F$ be given in terms of a polynomial $P$ by \eqref{eq:FChoice}. We let $P$ be given by a polynomial expression in $P_1=\sum_{i=1}^kt_i$ and $P_2=\sum_{i=1}^kt_i^2$ which is a linear combination of all monomials $(1-P_1)^bP_2^c$ with $b+2c\le 11$. There are \num{42} such monomials, and with $k=105$ we can calculate the $42\times 42$ rational symmetric matrices $A_1$ and $A_2$ corresponding to the coefficients of the quadratic forms $I_k(F)$ and $\sum_{m=1}^k J_k^{(m)}(F)$. We then find\footnote{An ancillary  \textit{Mathematica}\textsuperscript{\textregistered} file detailing these computations is available alongside this paper at \url{www.arxiv.org}.} that the largest eigenvalue of $A_1^{-1}A_2$ is
\begin{equation}
\lambda\approx 4.0020697\dotsc >4.
\end{equation}
Thus $M_{105}> 4$. This verifies part $(2)$ of Proposition \ref{prpstn:FChoices}. We comment that by taking a rational approximation to the corresponding eigenvector, we can verify this lower bound by calculating the ratio $\sum_{m=1}^kJ_k^{(m)}(F)/I_k(F)$ using only exact arithmetic.

For part $(1)$ of Proposition \ref{prpstn:FChoices}, we take $k=5$ and 
\begin{equation}P=(1-P_1)P_2+\frac{7}{10}(1-P_1)^2+\frac{1}{14}P_2-\frac{3}{14}(1-P_1).\end{equation}
With this choice we find that
\begin{equation}
M_5\ge \frac{\sum_{m=1}^kJ_k^{(m)}(F)}{I_k(F)}=\frac{\num{1417255}}{\num{708216}}>2.
\end{equation}
This completes the proof of Proposition \ref{prpstn:FChoices}.
\end{proof}
\section{Acknowledgements}
The author would like to thank Andrew Granville, Roger Heath-Brown, Dimitris Koukoulopoulos and Terence Tao for many useful conversations and suggestions. 

The work leading to this paper was started whilst the author was a D.Phil student at Oxford and funded by the EPSRC (Doctoral Training Grant EP/P505216/1), and was finished when the author was a CRM-ISM postdoctoral fellow at the Universit\'e de Montr\'eal.
\bibliographystyle{plain}
\bibliography{SmallGaps}
\end{document}